\documentclass{article}

\usepackage{amsthm,amsmath,amssymb,dsfont,latexsym,enumerate}

\newtheorem{theorem}{Theorem}[section]

\newtheorem{lemma}[theorem]{Lemma}
\newtheorem{corollary}[theorem]{Corollary}
\newtheorem{remark}[theorem]{Remark}

\newtheorem{example}[theorem]{Example}
\newtheorem{assumption}[theorem]{Assumption}

% számtestek
\newcommand{\N}{\mathbb{N}}
\newcommand{\Z}{\mathbb{Z}}

\newcommand{\R}{\mathbb{R}}

% valszám
\newcommand{\A}{\mathcal{A}}
\newcommand{\B}{\mathcal{B}}

\newcommand{\F}{\mathcal{F}}
\newcommand{\G}{\mathcal{G}}
\renewcommand{\H}{\mathcal{H}}
\newcommand{\X}{\mathcal{X}}
\newcommand{\Y}{\mathcal{Y}}
\newcommand{\M}{\mathcal{M}}

\renewcommand{\P}{\mathbb{P}}
\newcommand{\E}{\mathbb{E}}
\newcommand{\D}{\mathbb{D}}
\newcommand{\cov}{\mathrm{Cov}}
\newcommand{\law}[1]{\text{Law}(#1)}

\newcommand{\Pas}{\text{a.s.}}
\newcommand{\ind}{\mathds{1}}

% Analízis
\newcommand{\eps}{\varepsilon}
\newcommand{\la}{\lambda}
\newcommand{\ga}{\gamma}
\newcommand{\ka}{\kappa}
\renewcommand{\th}{\theta}
\newcommand{\dtv}{d_{\text{TV}}}

% integrálás
\newcommand{\dint}{\mathrm{d}}  % integrálás d-betű

\usepackage[margin=30mm]{geometry}

\title{Functional Central Limit Theorem and Strong Law of Large Numbers for Stochastic Gradient Langevin Dynamics\thanks{
		The second author was supported by the National Research, Development and Innovation Office 
		within the framework of the Thematic Excellence Program 2021; National Research subprogramme 
		``Artificial intelligence, large networks, data security: mathematical foundation and applications''.}}

\author{A. Lovas \and M. R\'asonyi}

\date{\today}

\begin{document}

\maketitle

\begin{abstract}
	We study the mixing properties of an important optimization algorithm of machine learning: the stochastic gradient Langevin dynamics (SGLD) with a fixed step size. The data stream is not assumed to be independent hence the SGLD is not a Markov chain, merely a \emph{Markov chain in a random environment}, which complicates the mathematical treatment considerably. We derive a strong law of large numbers and a functional central limit theorem for SGLD.
\end{abstract}

\section{Introduction}

We consider a recursive stochastic scheme called ``stochastic gradient Langevin dynamics'' (SGLD), first suggested by Welling and Teh \cite{wt}. 
Let $\la>0$ be the \emph{stepsize}, the measurable function $H:\R^d\times\R^m\to\R^d$ the 
\emph{updating function} and define the $\R^d$-valued
stochastic process $\th_n$, $n\geq 1$ recursively by

\begin{equation}\label{eq:theta}
\th_{n+1}=\th_n-\la H(\th_n,Y_n)+\sqrt{2\la}\xi_{n+1}.
\end{equation}

Here $\xi_n$, $n\geq 1$ is an independent sequence of standard $d$-dimensional Gaussian random variables, $Y_n$, $n\in\Z$ is an $\R^m$-valued strict sense stationary process, independent of $(\xi_{n})_{n\in\N}$, which represents the data stream fed into this procedure. Furthermore, we assume (for simplicity) that the initial value $\th_{0}\in\R^d$ is deterministic. 

The algorithm \eqref{eq:theta} is used for approximate sampling from high-dimensional probability distributions that are not necessarily log-concave. More precisely, let $U:\R^d\to\R_+$ be differentiable with derivative $h=\nabla U$ such that $h(\th)=E[H(\th,Y_0)]$, $\th\in\R^{d}$. Assume $U$ has a unique minimum at $\th^{\dagger}$. For $\la$ small and $n$ large, $\law{\th_n}$ is expected to be close to the probability defined by
$$
\pi(A)=\frac{\int_A e^{-U(\th)}d\th}{\int_{\R^d} e^{-U(\th')}\dint\th'},\ A\in\B(\R^d),
$$
see e.g.\ \cite{wt,6,dependent}. If $\sqrt{2\la}$ in \eqref{eq:theta} is replaced by $\sqrt{2\la/\beta}$ for some $\beta>0$ then the procedure samples from a distribution with density proportional to $e^{-\beta U(x)}$ which means, for $\beta$ large, that 
\begin{equation}\label{mocca}
E[\th_{n}]\approx \int_{\R^{d}}x\,\pi(dx)\approx\th^{\dagger},
\end{equation}
for $n$ large enough and $\la$ small enough. (In this paper we keep $\beta=1$ for simplicity.) 
		
\begin{example}
{\rm We consider a regularized logistic regression where $m\geq 2$, $d:=m-1$ and $(Q_{n},Z_{n})\in\{0,1\}\times\R^m$, $n\in\Z$ is a stationary sequence of random variables. The purpose is to optimize the regression parameters $\theta\in\R^d$ in such a way that the functional
$$
U(\th):=-E\left[\ln[\sigma^{Q_{0}}(\langle \th,Z_{0}\rangle)(1-\sigma(\langle \th,Z_{0}\rangle))^{1-Q_{0}}]\right]+c|\th|^{2}
$$
is minimized, where $\sigma(x)=1/(1+e^{-x})$ is the sigmoid function and $c>0$ is a constant. One thus tries to guess the binary variable $Q$ from the variables $Z$. We then have
$$
H^{i}(\th,(q,z))=-(q-\sigma(\langle \th,Z_{0}\rangle))z^{i}+2c \th^{i}
$$
for all $i=1,\ldots,d$.
		
As can be easily verified, this functional satisfies Assumption \ref{as:oldpaper}. The SGLD algorithm in this context could be applied to standard sentiment analysis problems where, based on the occurrences of key words (represented by the coordinates of $Z$) it should be decided whether a given review on a webshop is positive or not ($Q=1$ or $Q=0$), see e.g.\ \cite{amazon}. 

Review data continuously arrive and often exhibit temporal dependencies and non-i.i.d. characteristics. This is because customers' reviews can be influenced by previous reviews, current trends, or the changing sentiment of other customers, leading to dependencies between reviews. Consequently, the occurrence of certain key words and the overall sentiment may not be independent across reviews. For such sentiment analysis problems, variants of stochastic gradient descent are commonly used. However, due to the lack of convexity, it is worth considering the use of SGLD. 

Furthermore, sentiment analysis faces the challenge of concept drift, which refers to the situation where the underlying sentiment distribution of the data changes over time. This could be due to various factors, such as changes in product features, external events, or trends. The SGLD algorithm is capable of adapting to concept drift scenarios by continuously updating the model parameters as new data arrives.} 
%Nevertheless, due to the lack of stationarity, this setting falls outside of the scope of this paper.

\end{example}

One would try to numerically approximate the integral in \eqref{mocca} by
$$
\frac{\th_{0}+\ldots+\th_{n-1}}{n}.
$$
However, to guarantee the consistency of such a procedure, one needs to establish a corresponding law of large numbers.
		
In the case where $\la$ in \eqref{eq:theta} is replaced by $\la_{n}$ with a decreasing sequence $\la_n$, $n\geq 0$, under suitable assumptions, the averages
\begin{equation}\label{ito}
\frac{\sum_{k=0}^{n-1}\lambda_k \phi (\theta_k)}{\sum_{k=0}^{n-1}\lambda_k}.
\end{equation}
converge almost surely to $\int_{\R^d}\phi(z)\pi(dz)$ for appropriate functions $\phi$ as shown in \cite{ttv}, where a related central limit theorem is also established. 
		
In the case of fixed $\la$, \cite{vzt} estimated the $L^2$ distance of the averages from the mean of $\pi$. Both these papers, like most available studies, assume that $Y_n$, $n\in\Z$ are i.i.d. This does not hold true in several applications, prominently in the case of financial times series, see e.g.\ \cite{laruelle}, where stochastic approximation schemes were treated in a setting with possibly dependent data. See also \cite{6,dependent,tikosi2021convergence,lovas} for more about SGLD with dependent data. 
		
When the $Y_{n}$ are independent, $\th_{n}$ is a Markov chain. However, the case of general stationary $Y_{n}$ is an order of magnitude more involved mathematically since $\th_{n}$ is only a Markov chain \emph{in a random environment}, see Section \ref{sec:main} for details. 

In this article, we establish a law of large numbers (LLN) for functionals for \eqref{ito} when employing a fixed stepsize $\lambda>0$. Additionally, we will establish an invariance principle. These results serve as crucial theoretical guarantees for the consistency of estimates, such as \eqref{ito}, and form the foundation for constructing confidence intervals for these estimates. Our work builds upon and extends the findings in \cite{average}, where LLN and CLT were shown for the stochastic gradient method with dependent data, specifically in the special case of a linear updating rule.

Our arguments are based on results of \cite{herrndorf1984} which require establishing mixing properties for the process $\theta_{t}$. The recent paper \cite{Truquet1} is closely related to this part of our work: it shows mixing for a certain class of processes. That setting, however, does not cover ours since the strong minorization property $\mathbf{A2}$ in \cite{Truquet1} does not hold for our processes. 

Section \ref{sec:main} states and explains our main results. Their proof in Section \ref{sec:proof} is presented in a series of subsections.

\section{The main result}\label{sec:main}

First we formulate our working assumption on the stochastic iterative scheme given by \eqref{eq:theta}.
\begin{assumption}\label{as:oldpaper} 
	There are $\Delta,b>0$ such that, for all $\th\in\R^d$ and $y\in\R^m$,
	\begin{equation}\label{eq:dis}
	\langle H(\th,y),\th\rangle\geq \Delta \lVert\th\rVert^2-b,
	\end{equation}
	and for some $K>\Delta/\sqrt{2}$, 
	\begin{equation}\label{eq:linear}
	\lVert H(\th,y)\rVert\leq K( \lVert\th\rVert+\lVert y\rVert+1).
	\end{equation}  
	Furthermore, we assume that the process $(Y_t)_{t\in\Z}$ is strictly stationary, and there is $M>0$ such that 
	\begin{equation}\label{eq:bound}
	\lVert Y_{0}\rVert\leq M\ \Pas
	\end{equation}
\end{assumption}

Condition \eqref{eq:dis} is a standard dissipativity requirement, \eqref{eq:linear} is also mild and holds for Lipschitz-continuous $H$. By stationarity, \eqref{eq:bound} implies uniform boundedness of the data stream. This may look stringent from the mathematical
point of view, but it is evidently applicable in practice due to two main reasons. First, many real-world applications involve data that can be naturally bounded within certain ranges. For example, pixel values in images are confined to specific ranges (e.g., 0 to 255 for grayscale images). Second, scaling the data to a compact domain is a common preprocessing step in machine learning. In conclusion, the assumptions we have made are met by a wide range of learning problems of considerable practical importance.

Next, we briefly recall the main concepts of $\alpha$-mixing. Throughout this paper the probability space is $(\Omega, \F, \P)$, and 
for any two sub-$\sigma$-algebras $\G,\H\subset\F$, we define the measure of dependence
\begin{equation}\label{eq:dep}
\alpha (\G,\H) = \sup\limits_{G\in\G, H\in\H} \left|\P (G\cap H)-\P (G)\P (H)\right|.
\end{equation}	
Furthermore, for an arbitrary sequence of random variables $(W_t)_{t\in\Z}$, we define 
the $\sigma$-algebras $\F_{t,s}^W:=\sigma \left(W_k,\,t\le k\le s\right)$, $-\infty\le t\le s\le\infty$,
and introduce the dependence coefficients 
$$\alpha_j^W (n) = \alpha\left(\F_{-\infty,j}^W,\F_{j+n,\infty}^W\right),\,\,j\in\Z.$$ 
The mixing coefficient of $W$ is $\alpha^W (n)= \sup_{j\in\Z}\alpha_j^W (n)$, $n\ge 1$ 
which is obviously non-increasing in $n$. Note that, for strictly stationary $W$, $\alpha_j^W (n)$ 
does not depend on $j$, and thus $\alpha^W (n)=\alpha_0^W (n)$. We say that $W$ is $\alpha$-mixing if $\lim_{n\to\infty}\alpha^W (n)=0$.
% and weakly mixing if 
%$\lim_{n\to\infty}\frac{1}{n}\sum_{k=1}^{n}\alpha^W (k)=0$ holds.

\begin{assumption}\label{as:mixing} For some $\epsilon>0$,
	the $\alpha$-mixing coefficients $\alpha^{Y}(n)$, $n\in\N$ 
	satisfy 
	$$
	\sum_{n=1}^{\infty}\alpha^{Y}(n)^{1-\epsilon}<\infty.
	$$	
\end{assumption}

In \cite{lovas} it was established (under somewhat weaker conditions than Assumption \ref{as:oldpaper}) that $\law{\th_{n}}$ converges in total variation to a limiting probability $\mu_{\la}$ as $n\to\infty$. A rate estimate of the order $\exp(-n^{1/3})$
was obtained. Clearly, $\mu_{\la}$ differs from $\pi$ and the bias is $O(\sqrt{\la})$ under suitable conditions, see \cite{dependent}.

In this paper, using results of \cite{gerencser2022invariant}, we prove an exponential convergence rate of $\law{\th_{n}}$ to $\mu_{\la}$ under Assumption \ref{as:oldpaper}. More importantly, a functional central limit theorem is established under the additional Assumption \ref{as:mixing}. In the sequel, $\phi:\R^d\to\R$ denotes an at most polynomially growing measurable function i.e. for fixed but arbitrary constants $c_\phi,r>0$,
\begin{equation}\label{eq:polygrowth}
|\phi (\th)|\le c_\phi (1+\lVert \th\rVert^r),\,\,\th\in\R^{d}.
\end{equation} 
Our main results are summarized in the next two theorems.

\begin{theorem}\label{thm:LLN}
Let Assumption \ref{as:oldpaper} be in force, and $0<\la\leq \frac{\Delta}{K^2}$ be fixed. Then there is a strictly stationary process $(\th_t^\ast)_{t\in\N}$ on $\R^d$ and there are constants $c,\ka>0$ depending only on $\la$, $\Delta$, $b$, $K$ and $M$ such that for any $k\in\N$ and indices $0\le i_1<\ldots<i_k$,
$$
\dtv \left(\law{(\th_{i_1+n},\ldots,\th_{i_k+n})},\law{(\th_{i_1}^\ast,\ldots,\th_{i_k}^\ast)}\right)
\le c e^{-\ka n}.
$$
Furthermore, we have
$$
\frac{1}{n}{\sum_{j=1}^{n}\phi(\th_{j})}\to \E (\phi (\th_0^\ast)),\,\, n\to\infty
$$
almost surely and in $L^{p}$, for all $p\geq 1$ provided that $(Y_t)_{t\in\Z}$ is ergodic.
\end{theorem}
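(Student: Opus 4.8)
The plan is to regard $(\th_n)_{n\ge 0}$ as a Markov chain in the random environment $(Y_t)_{t\in\Z}$ and to combine the dissipativity with the smoothing effect of the Gaussian increments. Writing $P_y$ for the one–step transition kernel of the chain when the current datum equals $y$, the first step is to establish a contractive drift that is uniform over the environment: from \eqref{eq:dis}, \eqref{eq:linear} and the constraint $\la\le\Delta/K^2$ (which gives $\la^2K^2\le\la\Delta$), a direct expansion of $\lVert\th-\la H(\th,y)\rVert^2$ yields, after Young's inequality, a pointwise bound $\lVert\th-\la H(\th,y)\rVert\le\rho_0\lVert\th\rVert+C'$ with $\rho_0\in(0,1)$ (one may take $\rho_0$ arbitrarily close to $\sqrt{1-\la\Delta}$; here the hypothesis $K>\Delta/\sqrt2$ ensures $\la\Delta<2$, i.e.\ $\rho_0<1$); consequently
\[
\int (1+\lVert\th'\rVert^2)\,P_y(\th,\dint\th')\le\rho\,(1+\lVert\th\rVert^2)+C,\qquad \lVert y\rVert\le M,
\]
for some $\rho\in(0,1)$ and $C>0$ depending only on $\la,\Delta,b,K,M$. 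Iterating $\lVert\th_{n+1}\rVert\le\rho_0\lVert\th_n\rVert+C'+\sqrt{2\la}\lVert\xi_{n+1}\rVert$ also shows that $\th_n$ is dominated by a constant plus a geometrically weighted sum of i.i.d.\ Gaussian norms, whence $\sup_n\E\lVert\th_n\rVert^q<\infty$ for every $q\ge1$. Secondly, since $P_y$ has transition density $(4\pi\la)^{-d/2}\exp(-\lVert\th'-\th+\la H(\th,y)\rVert^2/(4\la))$ and $\lVert\th-\la H(\th,y)\rVert$ is bounded on $\{\lVert\th\rVert\le R\}\times\{\lVert y\rVert\le M\}$, there are $R,\delta>0$ and a probability measure $\nu$ with $P_y(\th,\cdot)\ge\delta\,\nu(\cdot)$ for all such $\th,y$.

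With a uniform geometric drift and a uniform Doeblin–type minorization available, the results of \cite{gerencser2022invariant} apply to the chain in the environment $(Y_t)_{t\in\Z}$ and furnish a strictly stationary process $(\th_t^\ast)_{t\in\N}$ — the unique invariant process of the chain, a measurable functional of $(Y_s)_{s\le t-1}$ and of an independent i.i.d.\ Gaussian sequence — together with, on a common probability space, a coupling of $(\th_n)_{n\ge0}$ with $(\th_t^\ast)_{t\ge0}$ and a coalescence time $\tau$ satisfying $\P(\tau>n)\le ce^{-\ka n}$ and $\th_n=\th_n^\ast$ for all $n\ge\tau$; the moment bounds above transfer to $\th_0^\ast$, so $\th_0^\ast$ has all polynomial moments. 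The announced total variation estimate is then immediate: by strict stationarity $\law{(\th_{i_1}^\ast,\dots,\th_{i_k}^\ast)}=\law{(\th_{i_1+n}^\ast,\dots,\th_{i_k+n}^\ast)}$, and since $i_1+n\le i_j+n$ for every $j$, on the event $\{\tau\le i_1+n\}$ the $k$-tuples $(\th_{i_1+n},\dots,\th_{i_k+n})$ and $(\th_{i_1+n}^\ast,\dots,\th_{i_k+n}^\ast)$ coincide; as $\{\tau\le n\}\subseteq\{\tau\le i_1+n\}$, the total variation distance is at most $\P(\tau>i_1+n)\le\P(\tau>n)\le ce^{-\ka n}$.

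For the law of large numbers, assume $(Y_t)_{t\in\Z}$ ergodic. Then $(Y_t,\xi_t)_{t\in\Z}$ is ergodic, being the independent product of an ergodic process with an i.i.d.\ (hence mixing) sequence, and since $(\th_t^\ast)$ is a measurable factor of it, $(\th_t^\ast)$ is strictly stationary and ergodic. Because $\phi$ grows at most polynomially and $\th_0^\ast$ has all moments, $\phi(\th_0^\ast)\in L^p$ for every $p\ge1$; Birkhoff's ergodic theorem gives $\frac1n\sum_{j=1}^n\phi(\th_j^\ast)\to\E\,\phi(\th_0^\ast)$ almost surely, and the mean ergodic theorem in $L^p$ gives the same convergence in $L^p$. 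It remains to show $\frac1n\sum_{j=1}^n\big(\phi(\th_j)-\phi(\th_j^\ast)\big)\to0$ in both senses. Almost surely this holds because $\tau<\infty$ a.s.\ and $\phi(\th_j)=\phi(\th_j^\ast)$ for $j\ge\tau$, so the numerator is eventually a fixed random constant. For the $L^p$ statement, write $\phi(\th_j)-\phi(\th_j^\ast)=\big(\phi(\th_j)-\phi(\th_j^\ast)\big)\ind_{\{\tau>j\}}$ and apply the Cauchy–Schwarz inequality together with the polynomial growth of $\phi$, the uniform moment bounds on $\th_j,\th_j^\ast$, and $\P(\tau>j)\le ce^{-\ka j}$; this yields $\lVert\phi(\th_j)-\phi(\th_j^\ast)\rVert_{L^p}\le c'e^{-\ka' j}$ for suitable $c',\ka'>0$, hence $\left\lVert\frac1n\sum_{j=1}^n(\phi(\th_j)-\phi(\th_j^\ast))\right\rVert_{L^p}\le\frac1n\sum_{j\ge1}c'e^{-\ka' j}\to0$. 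Combining the two parts finishes the proof.

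The principal difficulty is the first step — obtaining geometric ergodicity that is uniform in the random environment — since $(\th_n)$ is not a Markov chain, so classical Markov chain ergodic theory does not apply directly and one must rely on the uniform drift, the uniform Gaussian minorization, and the machinery of \cite{gerencser2022invariant} to produce the stationary process $(\th_t^\ast)$ together with its coalescent coupling with $(\th_n)$ at a geometric rate. Once that is in place, the ergodicity transfer to $(\th_t^\ast)$, the application of Birkhoff's theorem, and the exponential control of $\phi(\th_j)-\phi(\th_j^\ast)$ are routine.
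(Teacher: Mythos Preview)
Your strategy matches the paper's: uniform drift plus Gaussian minorization, a coupling of $(\theta_n)$ with a stationary version $(\theta_n^*)$ whose coalescence time has an exponential tail, then the TV bound from that tail and Birkhoff for the LLN. The execution differs in three places. First, the paper does \emph{not} extract the geometric-rate coupling from \cite{gerencser2022invariant}; that reference is invoked only for the existence of the stationary law $\mu^*$ and TV convergence without rate (Lemma~\ref{lem:gbrm}), while the exponential tail $\P(\tau>n)\le ce^{-\kappa n}$ is built by hand via an iterated-random-function representation and an exponential Lyapunov function $V(\theta)=e^{a\lVert\theta\rVert^2}$ (Lemmas~\ref{lem:T}--\ref{lem:annealed}), so you are attributing to the reference more than the paper actually takes from it. Second, for ergodicity of $(\theta_t^*)$ the paper cites Truquet \cite{Truquet1} (Remark~\ref{rem:truquet}); your factor argument via the ergodic product $(Y_t,\xi_t)_{t\in\Z}$ is more elementary but presupposes that $\theta_t^*$ can be realized as a shift-equivariant measurable functional of $(Y_s,\xi_s)_{s\le t}$ --- this does follow from a pullback construction once the coupling is in hand, but it is not immediate from the paper's definition of $\theta^*$ through the limiting law $\mu^*$, and you have simply asserted it. Third, the paper obtains $L^p$ convergence from a.s.\ convergence plus uniform integrability (Lemma~\ref{lem:UI}); your direct $L^p$ bound on each $\phi(\theta_j)-\phi(\theta_j^*)$ via H\"older and the exponential coupling tail is an equally valid and slightly cleaner variant.
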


\begin{theorem}\label{thm:CLT}
Under Assumptions \ref{as:oldpaper} and \ref{as:mixing}, for $0<\la\le\frac{\Delta}{K^2}$,
the process $X_n:=\phi (\th_n)-\E (\phi (\th_n))$, $n\in\N$ satisfies the invariance principle i.e. for $S_n=X_1+\ldots+X_n$, $\E S_n^2/n\to\sigma^2$ for some $\sigma\geq 0$ and the sequence of random functions
$$
B_n(t) = \frac{S_{\lfloor nt\rfloor}}{\sqrt{n}},\,\,t\in [0,1],\,n\ge 1
$$
is weakly convergent to $\sigma B_{t}$, $t\in [0,1]$ on $D[0,1]$ (the Skorokhod space endowed with the Skorohod topology) as $n\to\infty$. Here $B_{t}$, $t\in [0,1]$ is a standard Brownian motion.
\end{theorem}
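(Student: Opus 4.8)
### Proof strategy for Theorem~\ref{thm:CLT}

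The plan is to deduce the invariance principle from Theorem~\ref{thm:LLN} together with a classical functional central limit theorem for $\alpha$-mixing sequences, specifically the one in \cite{herrndorf1984}. The key point is that Theorem~\ref{thm:LLN} already furnishes a stationary process $(\th_t^\ast)$ approximating $(\th_t)$ exponentially fast in total variation on finite-dimensional marginals; so the strategy splits into (i) transferring the $\alpha$-mixing property of $(Y_t)$ to the process $(\th_t)$, (ii) passing to the stationary version $X_n^\ast:=\phi(\th_n^\ast)-\E\phi(\th_0^\ast)$ and verifying Herrndorf's moment and mixing-rate hypotheses for it, thereby getting the invariance principle for the partial sums of $X_n^\ast$, and (iii) showing that replacing $X_n^\ast$ by $X_n$ changes neither $\E S_n^2/n$ nor the weak limit on $D[0,1]$, because the total-variation discrepancy decays exponentially.

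For step (i), I would represent $\th_n$ as a measurable function of $\th_0$, of $(Y_0,\dots,Y_{n-1})$ and of $(\xi_1,\dots,\xi_n)$; since $(\xi_n)$ is i.i.d.\ and independent of $(Y_n)$, the $\sigma$-algebra $\F^{\th}_{-\infty,j}$ is contained in $\sigma(\F^Y_{-\infty,j-1})\vee\sigma(\xi_k:k\le j)$ while $\F^\th_{j+n,\infty}$ depends on $(Y_k:k\ge j+n-1)$ and $(\xi_k:k>j+n)$ \emph{plus} the value $\th_{j+n}$, which is the problematic tail-dependence term. This is where the exponential forgetting from \cite{gerencser2022invariant}/\cite{lovas} enters: the contraction estimate (which underlies Theorem~\ref{thm:LLN}) shows that the conditional law of $\th_{j+n}$ given the past, relative to the stationary environment, is within $c e^{-\ka' n}$ in total variation of a version that depends only on $(Y_k:k\ge j)$; feeding this coupling bound into the definition \eqref{eq:dep} of $\alpha$ yields $\alpha^{\th}(n)\le C\bigl(\alpha^{Y}(n-n_0)+e^{-\ka' n}\bigr)$ for a fixed lag $n_0$. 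Consequently Assumption~\ref{as:mixing} for $Y$, combined with $\sum_n e^{-\ka'(1-\eps)n}<\infty$, gives $\sum_n \alpha^{\th}(n)^{1-\eps}<\infty$, and since $\phi$ is measurable the same holds for $X_n$ and for $X_n^\ast$.

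For step (ii), I invoke Herrndorf's theorem: for a strictly stationary, centered, $\alpha$-mixing sequence it suffices to have $\E|X_0^\ast|^{2+\delta}<\infty$ for some $\delta>0$ together with $\sum_n \alpha(n)^{\delta/(2+\delta)}<\infty$, which (choosing $\delta$ so that $\delta/(2+\delta)\le 1-\eps$) follows from the previous step; the moment bound $\E|X_0^\ast|^{2+\delta}<\infty$ is immediate from the polynomial growth \eqref{eq:polygrowth} of $\phi$ and the fact that, by \eqref{eq:dis} and \eqref{eq:linear}, $\th_n$ (hence its stationary limit $\th_0^\ast$) has finite moments of all orders — this is the standard dissipativity-plus-linear-growth estimate giving $\sup_n\E\lVert\th_n\rVert^q<\infty$ for every $q$, under the stated condition $0<\la\le\Delta/K^2$ (here the constraint $K>\Delta/\sqrt2$ and the stepsize bound are exactly what make the drift contractive in $L^q$ after the Gaussian kick). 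Herrndorf then delivers $\E (S_n^{\ast})^2/n\to\sigma^2$ for some $\sigma\ge0$ and the weak convergence of $B_n^\ast(t)=S^{\ast}_{\lfloor nt\rfloor}/\sqrt n$ to $\sigma B_t$ in $D[0,1]$.

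For step (iii) — the transfer back to the non-stationary process — the finite-dimensional marginals of $(X_1,\dots,X_N)$ and $(X_1^\ast,\dots,X_N^\ast)$ agree up to a shift: by Theorem~\ref{thm:LLN}, $\dtv(\law{(\th_1,\dots,\th_N)},\law{(\th_1^\ast,\dots,\th_N^\ast)})\le c e^{-\ka}$ is \emph{not} what we need directly (it does not go to $0$ in $N$); instead I use the shifted bound $\dtv(\law{(\th_{1+n},\dots,\th_{N+n})},\law{(\th_1^\ast,\dots,\th_N^\ast)})\le c e^{-\ka n}$ together with stationarity of $(\th^\ast_t)$ to build, for each $n$, a coupling on which $\th_{i+n}=\th_i^\ast$ for all $i\le N$ off an event of probability $\le c e^{-\ka n}$. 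Writing $S_N - \E S_N = \sum_{i=1}^N X_i$ and comparing with $\widetilde S_N:=\sum_{i=1}^N(\phi(\th_{i+n})-\E\phi(\th_{i+n}))$, the difference $S_N-\widetilde S_N$ telescopes into a bounded number of terms near the two ends of the window $[1,n]\cup[N+1,N+n]$, each with uniformly bounded $L^2$ norm, so $\E(S_N-\widetilde S_N)^2 = O(n)$ uniformly in $N$; choosing $n=n(N)\to\infty$ slowly (e.g.\ $n=\lfloor\log N\rfloor$) makes $\E(S_N-\widetilde S_N)^2/N\to 0$, which upgrades $\E (S_N^{\ast})^2/N\to\sigma^2$ to $\E S_N^2/N\to\sigma^2$ and, by Slutsky/Billingsley's converging-together lemma on $D[0,1]$, transfers the functional limit from $B_n^\ast$ to $B_n$. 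The coupling must also control the mean shift $\E\phi(\th_{i+n})-\E\phi(\th_i^\ast)$, but this is $O(e^{-\ka(i+n)})$ and summable, contributing a negligible deterministic perturbation.

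The main obstacle I anticipate is step (i): making the heuristic "the algorithm forgets its past exponentially, hence inherits the mixing rate of the data" into a rigorous bound on $\alpha^{\th}(n)$. The subtlety is that $\th_{j+n}$ is not a function of only finitely many $Y$'s, so one cannot simply quote a mixing-rate inequality for functionals of mixing sequences; one genuinely needs the quantitative forgetting estimate from \cite{gerencser2022invariant,lovas} and a careful coupling argument to absorb the "initial segment" of the environment into an exponentially small error. Once $\alpha^{\th}(n)^{1-\eps}$ is summable, the remaining steps are a fairly standard combination of moment bounds, Herrndorf's theorem, and a converging-together argument on Skorokhod space.
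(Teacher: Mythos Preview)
Your overall strategy is sound and the three steps you isolate are the right ones; step~(i) is essentially the content of the paper's Lemma~\ref{lem:mixing}, and your heuristic for it is correct in spirit, although the ``fixed lag $n_0$'' should really be a growing lag: the paper splits the gap $n$ in half, using $\lfloor n/2\rfloor$ steps for the forgetting/coupling and the remaining $\lfloor n/2\rfloor$ for the $\alpha$-mixing of $Y$, which is why the bound reads $\alpha^\th(n)\le \alpha^Y(\lfloor n/2\rfloor)+Ce^{-\ka n/2}$ rather than $\alpha^Y(n-n_0)$.

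Where you diverge from the paper is in steps (ii)--(iii). You apply Herrndorf to the \emph{stationary} sequence $X_n^\ast$ and then transfer the conclusion back to $X_n$ via coupling. The paper instead applies Herrndorf's Corollary~1 \emph{directly} to the non-stationary sequence $X_n$: that corollary does not require stationarity, only (a) $\sup_n\E|X_n|^{2+\delta}<\infty$, (b) $\E S_n^2/n\to\sigma^2$, and (c) $\sum_n\alpha^X(n)^{\delta/(2+\delta)}<\infty$. Condition (a) is Lemma~\ref{lem:UI}, condition (c) follows from Lemma~\ref{lem:mixing} as above, and condition (b) is handled by an autocovariance lemma (Lemma~\ref{lem:autocov}) showing that $\cov(\phi(\th_k),\phi(\th_{k+l}))\to\cov(\phi(\th_0^\ast),\phi(\th_l^\ast))$ for each fixed $l$, together with a uniform-in-$k$ bound $|\cov(\phi(\th_k),\phi(\th_{k+l}))|\le\Lambda(\alpha^Y(\lfloor l/2\rfloor)^{1-\epsilon}+e^{-\ka l/8})$, from which a Ces\`aro/dominated-convergence argument gives the limit of $\E S_n^2/n$. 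So the paper still uses the coupling with the stationary version, but only at the level of two-point covariances, which is lighter than your step~(iii).

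Your step~(iii), as sketched, is workable but more delicate than you indicate. First, $\E(S_N-\widetilde S_N)^2=O(n^2)$ rather than $O(n)$ (it is a sum of $2n$ boundary terms, Minkowski gives $\|S_N-\widetilde S_N\|_2=O(n)$); this is harmless since $n^2/N\to 0$ for $n=O(\log N)$. Second, and more seriously, the transfer of the \emph{functional} limit on $D[0,1]$ requires controlling $\sup_{k\le N}|S_k-S_k^\ast|/\sqrt N$ in probability, not just $|S_N-S_N^\ast|/\sqrt N$; your coupling gives $\th_{i+n}=\th_i^\ast$ on a good event, so one must uniformly bound the $2n$ boundary terms times $\max_{i\le N+n}|X_i|$, and also handle the bad event via higher-moment bounds on $S_k$ and $S_k^\ast$. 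This can be done, but it is exactly the kind of bookkeeping the paper's direct approach avoids. In short: your route is a legitimate alternative; the paper's is shorter because it exploits the fact that Herrndorf's invariance principle already covers non-stationary mixing sequences once the variance normalisation is verified.
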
	

% Ide átvezető szöveg kell + az előbbiek alkalmazhatóságának demonstrálása, Herrndorf, invariace principle
\begin{remark}
{\rm As we shall see later (c.f. Corollary \ref{cor:drift} and Lemma \ref{lem:annealed}), it is also true that $\E (\phi(\th_n))\to\E (\phi(\th_0^\ast))$ exponentially fast as $n\to\infty$ hence the biased sequence $X_n':=\phi (\th_n)-\E (\phi (\th_0^\ast))$ also satisfies the invariance principle.
}
\end{remark}

\section{Proofs}\label{sec:proof}

Throughout the rest of the paper, we use the 
notation $\X := \R^d$ and $\Y:=\R^m$ moreover $\B(\X)$ will be used for the standard Borel $\sigma$-algebra of $\X$. 
As pointed out in Section 5 of \cite{lovas} and also in \cite{tikosi2021convergence}, the recursive 
stochastic scheme \eqref{eq:theta} can be considered as a Markov chain in an exogenous random environment (MCRE) 
which means that there is a parametric kernel\footnote{That is, $Q(\cdot,\cdot,A)$ is measurable
for all $A\in\mathcal{B}(\mathcal{X})$ and $Q(y,x,\cdot)$ is a probability for all $(y,x)\in\mathcal{Y}\times\mathcal{X}$.}  
$Q:\Y\times\X\times\B(\X)\to [0,1]$ such that
$$
\P (\th_{t+1}\in A\mid (\th_i)_{0\le i\le t},\,(Y_j)_{j\in\Z}) = Q(Y_t,\th_t,A)
$$
almost surely, for all $A\in\mathcal{B}(\mathcal{X})$.
In our case, transition kernel is given by
\begin{equation*}%\label{eq:Q}
Q(y,\theta,A) = \P\left(\th-\la H(\th,y)+\sqrt{2\la}\xi_0  \in A\right),\,\,y\in\R^m,\th\in\R^d,A\in\B(\X),
\end{equation*}
where $\xi_0$ is as in the recursion \eqref{eq:theta} i.e. a standard $d$-dimensional Gaussian random variable.

Here we give a brief explanation of the proof strategy. First, we fix a trajectory of $Y$ (that is,
we consider the ``quenched'' version of the process) and using a standard representation of MCREs by iterated random functions, we deduce an upper estimate for the 
coupling probability between realizations of the chain starting from different, possibly random, initial values
Lemma \ref{lem:coupling}). 
To achieve this, we demonstrate that small sets, where coupling can occur with a positive probability, are visited frequently enough with large probability. The so-called ''annealed version'' of this crucial result 
(Lemma \ref{lem:annealed}) allows us to establish that the process $(\th_t)_{t\in\N}$ inherits the mixing properties of the environment (Lemma \ref{lem:mixing}). 
The proof of Theorem \ref{thm:LLN} also heavily relies on this inequality. We actually 
prove a bit more: we show that there exist an almost surely finite random time at which suitable versions of $(\th_t)_{t\in\N}$ and $(\th_t^\ast)_{t\in\N}$ are coupled to each other. Finally, the proof of the invariance principle (Theorem \ref{thm:CLT}) boils down to verifying conditions of Corollary 1 in Herrndorf's paper \cite{herrndorf1984}: we verify that the mixing coefficients decrease sufficiently fast and that the
covariance function of the process $\theta$ converges to its stationary counterpart.

\subsection{Drift and minorization conditions for $\theta_{t}$}

In this point, we establish suitable versions of the standard drift and minorization conditions, known from the theory of Markov chains (See e.g. \cite{mt}), for $(\theta_{t})_{t\in\N}$.
According to the next lemma, there is an $a=a(\lambda)>0$ such that for the Lyapunov function $V(\th) = \exp(a\lVert\th\rVert^2)$ and for the parametric kernel $Q$, a Foster--Lyapunov-type \emph{drift condition} holds.

\begin{lemma}\label{lem-qexp-drift}
	For any $0<\la<\frac{\Delta}{K^2}$, there exist $a>0$ such that for $V(\th)=\exp (a \lVert\th\rVert^2)$,
	$$	[Q(y)V](\th):=\int_{\X} V(z)\,Q(y,\th,\dint z) \le\ga V(\th)+C,\,\,\lVert y\rVert\le M $$
	holds with constants $\ga\in (0,1)$ and $C\ge 1$.
\end{lemma}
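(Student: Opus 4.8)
The plan is to compute the one-step action of the kernel on $V(\th)=\exp(a\lVert\th\rVert^2)$ directly from the recursion, choosing $a$ small enough at the end. First I would write, for a fixed $y$ with $\lVert y\rVert\le M$, the random variable $\th_{+}:=\th-\la H(\th,y)+\sqrt{2\la}\,\xi$ with $\xi$ a standard $d$-dimensional Gaussian, so that $[Q(y)V](\th)=\E\exp(a\lVert\th_{+}\rVert^2)$. Expanding the square,
$$
\lVert\th_{+}\rVert^2=\lVert\th-\la H(\th,y)\rVert^2+2\sqrt{2\la}\,\langle\th-\la H(\th,y),\xi\rangle+2\la\lVert\xi\rVert^2.
$$
The cross term is Gaussian conditionally on everything else, and the $\lVert\xi\rVert^2$ term has a known exponential moment, so after conditioning on $\xi$ only through these two pieces I would use the Gaussian moment generating function: $\E\exp\big(2a\sqrt{2\la}\,\langle v,\xi\rangle+2a\la\lVert\xi\rVert^2\big)=(1-4a\la)^{-d/2}\exp\big(\tfrac{4a^2\la\lVert v\rVert^2}{1-4a\la}\big)$ for $4a\la<1$, with $v=\th-\la H(\th,y)$. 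This reduces the estimate to controlling $\exp\big(a\lVert v\rVert^2\cdot\tfrac{1}{1-4a\la}\big)$ up to the multiplicative constant $(1-4a\la)^{-d/2}$.

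Next I would bound $\lVert v\rVert^2=\lVert\th-\la H(\th,y)\rVert^2=\lVert\th\rVert^2-2\la\langle H(\th,y),\th\rangle+\la^2\lVert H(\th,y)\rVert^2$. Here the dissipativity bound \eqref{eq:dis} gives $-2\la\langle H(\th,y),\th\rangle\le-2\la\Delta\lVert\th\rVert^2+2\la b$, and the linear-growth bound \eqref{eq:linear} together with $\lVert y\rVert\le M$ gives $\la^2\lVert H(\th,y)\rVert^2\le\la^2 K^2(\lVert\th\rVert+M+1)^2\le 2\la^2K^2\lVert\th\rVert^2+2\la^2K^2(M+1)^2$. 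Collecting terms, $\lVert v\rVert^2\le\big(1-2\la\Delta+2\la^2K^2\big)\lVert\th\rVert^2+D_0$ for a constant $D_0=D_0(\la,b,K,M)$. The coefficient of $\lVert\th\rVert^2$ is $1-2\la(\Delta-\la K^2)$, which is strictly less than $1$ precisely because $\la<\Delta/K^2$; call it $\rho=\rho(\la)\in(0,1)$. So $\lVert v\rVert^2\le\rho\lVert\th\rVert^2+D_0$.

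Putting the pieces together, $[Q(y)V](\th)\le(1-4a\la)^{-d/2}\exp\!\big(\tfrac{a}{1-4a\la}(\rho\lVert\th\rVert^2+D_0)\big)$. Now I choose $a>0$ small enough that simultaneously $4a\la<1$ and $\tfrac{\rho}{1-4a\la}<1$ — possible since $\rho<1$, so for $a\downarrow 0$ the ratio $\rho/(1-4a\la)\to\rho<1$ by continuity. Fix such an $a$ and set $\ga_0:=\rho/(1-4a\la)\in(0,1)$; then $[Q(y)V](\th)\le(1-4a\la)^{-d/2}e^{aD_0/(1-4a\la)}\exp(a\ga_0\lVert\th\rVert^2)$. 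To convert $\exp(a\ga_0\lVert\th\rVert^2)$ into the required $\ga V(\th)+C$ form, pick any $\ga\in(\ga_0,1)$ and use that $x\mapsto e^{a\ga_0 x}-\ga e^{a x}$ ($x=\lVert\th\rVert^2\ge 0$) is bounded above by some finite constant (it tends to $-\infty$ as $x\to\infty$ and is continuous), hence $\exp(a\ga_0\lVert\th\rVert^2)\le\ga\exp(a\lVert\th\rVert^2)+C_1$ for a suitable $C_1$; absorbing the leading multiplicative constant into a final $C\ge 1$ and rescaling $\ga$ upward slightly if needed finishes the argument. The only mildly delicate point is the joint choice of $a$ making both $4a\la<1$ and $\rho/(1-4a\la)<1$ hold — a routine continuity/smallness argument — so I do not anticipate a genuine obstacle here; the rest is bookkeeping with elementary Gaussian integrals and the two hypotheses of Assumption \ref{as:oldpaper}.
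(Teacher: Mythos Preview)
Your proposal is correct and follows essentially the same route as the paper: explicit Gaussian integration gives $[Q(y)V](\th)=(1-4a\la)^{-d/2}\exp\big(\tfrac{a\lVert\th-\la H(\th,y)\rVert^2}{1-4a\la}\big)$, the assumptions yield $\lVert\th-\la H(\th,y)\rVert^2\le\rho\lVert\th\rVert^2+D_0$ with $\rho=1-2\la\Delta+2\la^2K^2<1$, and then $a$ is chosen small. The only minor point is that in your final conversion the multiplicative prefactor $c_1:=(1-4a\la)^{-d/2}e^{aD_0/(1-4a\la)}\ge 1$ must also be absorbed into the contraction rate, so you actually need $c_1\ga_0<1$ rather than just $\ga_0<1$ --- this again holds for $a$ small enough by the same continuity argument, while the paper handles this step by splitting on $\{\lVert\th\rVert>r\}$ versus $\{\lVert\th\rVert\le r\}$.
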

\begin{proof}
	We can write
	\begin{align*}
	[Q(y)V](\th) &= \E \left[\exp\left(a\lVert \th-\la H(\th,y)+\sqrt{2\la}\xi_0\rVert^2\right)\right] 
	=
		\frac{1}{(1-4\la a)^{d/2}}\exp\left({a\frac{\lVert \th-\la H(\th,y)\rVert^2}{1-4\la a}}\right),
	\end{align*}
	where by Assumption \ref{as:oldpaper}, for $\lVert y\rVert\le M$, we have
	$$
	\lVert \th-\la H(\th,y)\rVert^2 \le (2K^2\la^2-2\Delta\la+1)\lVert \th\rVert^2
	+
	2(\la b+\la^2 K^2 (1+M)^2).
	$$
	For $0<\la<\frac{\Delta}{K^2}$, $0<2K^2\la^2-2\Delta\la+1<1$ hence we can choose $a>0$ so small that
	$$
	\frac{2K^2\la^2-2\Delta\la+1}{1-4\la a}<1.
	$$
	
	To sum up, we obtained that there are $c_1,c_2>0$ such that $c_2<a$ and $[Q(y)V](\th)\le c_1 \exp (c_2 \lVert\th\rVert^2)$ hence for $r>0$ large enough
	$\ga:=c_1 e^{-(a-c_2)r^2}<1$, and thus
	$$
	[Q(y)V](\th)\le\ga V(\th)+C
	$$
	holds with $C=c_1 e^{c_2 r^{2}}$, which completes the proof.
\end{proof}

\begin{corollary}\label{cor:drift}
	By induction, easily follows that for any collection $\{y_a,y_{a+1},\ldots,y_b\}\subset \Y$, $\lVert y_i\rVert\le M$, $a\le i\le b$, we have
	\begin{equation}\label{eq:drift_it}
	[Q(y_b)\ldots Q(y_a)V](\th):=[Q(y_b)[\ldots [Q(y_a)V]\ldots]](\th)\le \ga^{b-a+1}V(\th)+\frac{C}{1-\ga}
	\end{equation}
	hence by the tower rule, we can estimate further and obtain 
	$$
	\sup_{t\in\N} \E (V (\th_t'))\le \E (V (\th_0'))+\frac{C}{1-\ga}<\infty
	$$
	for initial values $\th_0'$ satisfying $ \E (V (\th_0'))<\infty$.
%	Furthermore, by the Markov inequality, we have
%	\begin{equation}\label{eq:dmrk}
%		\sup_{t\in\N} \P (\lVert \th_t'\rVert\ge n) = O(e^{-an^2}).
%	\end{equation}
\end{corollary}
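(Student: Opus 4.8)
The plan is to prove the iterated drift inequality \eqref{eq:drift_it} by induction on the number of kernels $n := b-a+1$, and then to transfer it to the process $(\th_t')_{t\in\N}$ by conditioning on the environment.

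For the induction, the base case $n=1$ is exactly Lemma \ref{lem-qexp-drift}. For the inductive step I would use three elementary properties of the operator $W\mapsto [Q(y)W]$ acting on nonnegative measurable functions: it is \emph{monotone} (if $0\le W_1\le W_2$ pointwise then $[Q(y)W_1]\le [Q(y)W_2]$), it is linear on integrable functions, and it maps each constant function $\equiv c$ to itself, since $Q(y,\th,\cdot)$ is a probability measure. Assuming the bound for $b-a$ kernels, i.e. $[Q(y_{b-1})\ldots Q(y_a)V](\th)\le \ga^{n-1}V(\th)+C\sum_{j=0}^{n-2}\ga^j$, I apply $Q(y_b)$ to both sides (using monotonicity and the fact that it fixes constants) and then invoke Lemma \ref{lem-qexp-drift} once more, obtaining
$$[Q(y_b)\ldots Q(y_a)V](\th)\le \ga^{n-1}\big(\ga V(\th)+C\big)+C\sum_{j=0}^{n-2}\ga^j=\ga^n V(\th)+C\sum_{j=0}^{n-1}\ga^j .$$
Since $\ga\in(0,1)$, the geometric sum is at most $1/(1-\ga)$, which gives \eqref{eq:drift_it}.

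For the second assertion I would invoke the Markov-chain-in-random-environment structure recalled at the beginning of this section: conditionally on $\th_0'=\th$ and on a trajectory $(Y_j)_{j\in\Z}=(y_j)_{j\in\Z}$, the chain $(\th_t')$ is an inhomogeneous Markov chain driven by the transition kernels $Q(y_0),Q(y_1),\ldots$, so that $\E\big(V(\th_t')\mid \th_0',(Y_j)_{j\in\Z}\big)=[Q(Y_{t-1})\ldots Q(Y_0)V](\th_0')$ almost surely. By \eqref{eq:bound} and stationarity we have $\lVert Y_j\rVert\le M$ a.s.\ for every $j$, so \eqref{eq:drift_it} applies with $y_i=Y_i$ and yields $\E\big(V(\th_t')\mid \th_0',(Y_j)_{j\in\Z}\big)\le \ga^t V(\th_0')+C/(1-\ga)$. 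Taking expectations, using $\ga^t\le 1$ and $\E(V(\th_0'))<\infty$, the tower rule gives $\E(V(\th_t'))\le \E(V(\th_0'))+C/(1-\ga)$ for all $t$, and taking the supremum over $t\in\N$ finishes the proof.

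Since everything reduces to Lemma \ref{lem-qexp-drift} together with bookkeeping, there is no genuine obstacle; the only point that requires (minor) care is the conditioning step, where one must check that the composition of kernels is indeed the conditional expectation of $V(\th_t')$ given the whole environment — this is precisely the defining property of the MCRE — and that the almost sure bound $\lVert Y_j\rVert\le M$ legitimizes applying the estimate \eqref{eq:drift_it}, which is uniform over $\{\lVert y\rVert\le M\}$, along the random trajectory.
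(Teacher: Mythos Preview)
Your proposal is correct and follows exactly the approach indicated in the paper: the iterated bound \eqref{eq:drift_it} is obtained by a straightforward induction from Lemma~\ref{lem-qexp-drift} using monotonicity and the constant-preserving property of each $Q(y)$, and the uniform moment bound then follows by the tower rule after conditioning on $(\th_0',(Y_j)_{j\in\Z})$. The paper does not spell out these steps beyond the phrases ``by induction'' and ``by the tower rule'', so your write-up is in fact a faithful expansion of what is asserted there.
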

From now on, let us fix a $\la\in (0,\Delta/K^2)$ and $a>0$ as in Lemma \ref{lem-qexp-drift}. 

\medskip
In the theory of Markov chains, the Foster-Lyapunov condition is often accompanied by 
a \emph{minorization condition} on suitable ``small sets''. In the current model we do have such a \emph{minorization condition} on 
every compact set. (In other words, compact sets are small.) 
To see this, for fixed $R>0$, $\lVert \th \rVert\le R$ and $\lVert y \rVert\le M$, we can write
\begin{align*}
Q(y,\th,A) &= \int_{\X} \ind_{\th-\la H(\th,y)+\sqrt{2\la}z\in A}f_{\xi_0}(z)\,\dint z =
\int_{\X} \ind_{u\in A}\frac{1}{(2\la)^{d/2}}f_{\xi_0}\left(
\frac{1}{\sqrt{2\la}}(\th-\la H(\th,y)-u) 
\right)\,\dint z \\
&\ge m_{R,M,\la,K} \times\mathrm{Leb} ( A\cap\{x\mid \lVert x\rVert\le R\}),
\end{align*}
where $f_{\xi_0}$ is the probability density function of $\xi_0$ and the positive constant $m_{R,M,\la,K}$ is given by
$$
m_{R,M,\la,K} = \inf \left\{
f_{\xi_0}(z)\middle| z\in\X,\,\lVert z\rVert\le \frac{(\la K+2)R+\la K (M+1)}{\sqrt{2\la}}
\right\}.
$$

We note this observation in the next lemma.

\begin{lemma}\label{lem:minor}
For every $R>0$, there is a Borel probability measure $\nu_R$ on $\B(\X)$ and a coefficient $\tilde{\alpha}_R\in (0,1)$ such that
for $\lVert y\rVert\le M$ and $\lVert \th \rVert\le R$,
\begin{equation}\label{eq:minor}
Q(y,\th,A) \ge \tilde{\alpha}_R \nu_R (A),\,\,A\in\B (\X).
\end{equation}
\end{lemma}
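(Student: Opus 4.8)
The essential estimate here is already the display carried out immediately before the statement, so the plan is mainly to package it into the form \eqref{eq:minor} and then to verify that the resulting mixing constant is genuinely strictly less than $1$.

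First I would record the structure of the kernel: $Q(y,\th,\cdot)$ is the law of the Gaussian vector $\th-\la H(\th,y)+\sqrt{2\la}\,\xi_0$, hence a non-degenerate Gaussian measure on $\X$ with mean $\th-\la H(\th,y)$ and covariance $2\la I_d$, so it admits a continuous and everywhere strictly positive Lebesgue density. Writing $Q(y,\th,A)$ as the integral of that density and discarding everything outside $A\cap B_R$, where $B_R:=\{x\in\X:\lVert x\rVert\le R\}$, the linear growth bound \eqref{eq:linear} together with the bound $\lVert Y_0\rVert\le M$ pins down the argument of $f_{\xi_0}$ uniformly: for $\lVert\th\rVert\le R$, $\lVert y\rVert\le M$ and $u\in B_R$,
$$
\lVert\th-\la H(\th,y)-u\rVert\le\lVert\th\rVert+\la\lVert H(\th,y)\rVert+\lVert u\rVert\le(2+\la K)R+\la K(M+1).
$$
Since $f_{\xi_0}$ is continuous and positive, its infimum $m_{R,M,\la,K}$ over the compact ball of radius $((2+\la K)R+\la K(M+1))/\sqrt{2\la}$ is strictly positive, and after the change of variables $u=\th-\la H(\th,y)+\sqrt{2\la}z$ one obtains $Q(y,\th,A)\ge m_{R,M,\la,K}\,\mathrm{Leb}(A\cap B_R)$ for every $A\in\B(\X)$, uniformly in such $\th$ and $y$.

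It then remains to normalize and to check strictness. I would set $\nu_R(A):=\mathrm{Leb}(A\cap B_R)/\mathrm{Leb}(B_R)$, a genuine Borel probability measure because $0<\mathrm{Leb}(B_R)<\infty$, and $\tilde\alpha_R:=m_{R,M,\la,K}\,\mathrm{Leb}(B_R)>0$; with these choices the bound above reads exactly as \eqref{eq:minor}. Putting $A=\X$ into \eqref{eq:minor} already forces $\tilde\alpha_R\le1$, while the strict inequality follows from the full support of the Gaussian kernel: since $Q(y,\th,\X\setminus B_R)>0$, one has $\tilde\alpha_R=\tilde\alpha_R\,\nu_R(B_R)\le Q(y,\th,B_R)=1-Q(y,\th,\X\setminus B_R)<1$. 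I do not anticipate any serious obstacle; the only point requiring a moment's attention is the uniformity over $\lVert\th\rVert\le R$ and $\lVert y\rVert\le M$ of the lower bound on the transition density, and this is precisely what the sublinearity \eqref{eq:linear} and the almost sure bound $\lVert Y_0\rVert\le M$ are there to supply.
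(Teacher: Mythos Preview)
Your proposal is correct and follows exactly the paper's approach: the paper derives the same lower bound $Q(y,\th,A)\ge m_{R,M,\la,K}\,\mathrm{Leb}(A\cap B_R)$ in the display immediately preceding the lemma and simply records the conclusion. You go slightly further by explicitly normalizing to a probability $\nu_R$ and verifying $\tilde\alpha_R<1$ via the full support of the Gaussian, details the paper leaves implicit.
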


\subsection{Stationary initialization}

We need to show that, starting from a suitable random initial state $\th_0^\ast$, the process $(\theta_{t}^\ast)_{t\in\N}$ has a stationary version (in the strict sense).

Let $\M^Y$ be the set of Borel probability laws on $\X\times\Y^\Z$ such that their second marginal equals 
to the law of $(Y_t)_{t\in\Z}$, and $\M_b^Y$ denotes the set of those $\mu\in\M^Y$ for which the process 
$(\th'_t)_{t\in\N}$ started from some random initial state $\th'_0$ with $\law{(\th'_0,(Y_t)_{t\in\Z})}=\mu$ satisfies
\begin{equation}\label{eq:MbY}
	\sup_{t\in\N} \P (\lVert \th'_t\rVert\ge n)\to 0,\,n\to\infty.
\end{equation}

By Corollary \ref{cor:drift} and the Markov inequality, for every random variable $(\th'_0,(Y_t)_{t\in\Z})$ with law in $\M^Y$ and $\E (V(\th_0'))<\infty$, \eqref{eq:MbY} holds hence
$\law{(\th'_0,(Y_t)_{t\in\Z})}\in\M_b^Y$. In particular, for any deterministic $\th_0\in\X$, $\delta_{\th_0}\otimes\law{(Y_t)_{t\in\Z}}\in\M_b^Y$, where $\delta_{\th_0}$ stands 
for the Dirac measure concentrated on $\th_0$. It follows that $\M_b^Y\ne\emptyset$.

\begin{lemma}\label{lem:gbrm} 
For each $\law{(\th'_0,(Y_t)_{t\in\Z})}\in\M_b^Y$, there exists a limiting probability $\mu^\ast$ on $\B (\X\times\Y^\Z)$ such that
$$
\dtv (\law{(\th'_t,(Y_{k+t})_{k\in\Z})},\mu^\ast)\to 0,\,t\to\infty.
$$
In addition, $\mu^\ast$ does not depend on the choice of $(\th'_0,(Y_t)_{t\in\Z})$.
If $\law{(\theta^{*}_{0},(Y_{k})_{k\in\Z})}=\mu_{*}$ then the process $(\th_t^\ast,(Y_{k+t})_{k\in\Z})$, $t\in\N$ is (strict-sense) stationary.
\end{lemma}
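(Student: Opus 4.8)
The plan is to build the limiting law $\mu^\ast$ by a coupling argument on the extended state space $\X\times\Y^\Z$, exploiting the drift condition (Corollary \ref{cor:drift}) and the compact-set minorization (Lemma \ref{lem:minor}), and then to transfer the convergence to the second-marginal-fixed setting. First I would introduce the \emph{shift} on the environment: for a law $\mu\in\M^Y$ describing $(\th'_0,(Y_k)_{k\in\Z})$, running one step of the recursion \eqref{eq:theta} and shifting the $Y$-index produces a new pair $(\th'_1,(Y_{k+1})_{k\in\Z})$ whose law is again in $\M^Y$ (the second marginal is preserved by strict stationarity of $Y$), and is again in $\M_b^Y$ because of the uniform tightness \eqref{eq:MbY}, which is itself guaranteed by the drift estimate and Markov's inequality as already noted. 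Thus we have a well-defined map on $\M_b^Y$, and the statement amounts to showing it has a unique fixed point attracting every element of $\M_b^Y$ in total variation, with a stationary realization at the fixed point.

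The heart of the argument is a coupling between two copies of the chain driven by the \emph{same} environment trajectory $(Y_t)_{t\in\Z}$ but started from initial pairs with laws $\mu_1,\mu_2\in\M_b^Y$ (coupled so that their $Y$-components coincide). For the quenched chain, this is exactly the situation handled by Lemma \ref{lem:coupling}: one uses the iterated-random-function representation, the drift condition of Corollary \ref{cor:drift} forces both copies into a common compact ball $\{\lVert\th\rVert\le R\}$ with high probability (uniformly in time, by \eqref{eq:MbY}), and on that ball the minorization \eqref{eq:minor} of Lemma \ref{lem:minor} gives a uniform lower bound $\tilde\alpha_R\nu_R$ that allows a successful coupling attempt with probability at least $\tilde\alpha_R$ each time both copies are in the ball. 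Hence the coupling time is almost surely finite and its tail decays, so after coupling the two $\th$-marginals agree pathwise while the $Y$-components are identical by construction; taking expectations over the environment (the ``annealed'' passage, Lemma \ref{lem:annealed}) yields $\dtv(\law{(\th^{1}_t,(Y_{k+t})_k)},\law{(\th^{2}_t,(Y_{k+t})_k)})\to 0$. Applying this with $\mu_2$ replaced by the $t_0$-step iterate of $\mu_1$ shows the sequence $\law{(\th'_t,(Y_{k+t})_k)}$ is Cauchy in total variation, hence converges to some $\mu^\ast$; comparing two different starting laws shows $\mu^\ast$ is the same for all of $\M_b^Y$.

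For the final assertion, initialize with $\law{(\th^\ast_0,(Y_k)_k)}=\mu^\ast$. Because $\mu^\ast$ is the limit and is invariant under the one-step-plus-shift map (the map is $\dtv$-contractive towards $\mu^\ast$, and $\mu^\ast$ is in its domain $\M_b^Y$, so it is fixed), we get $\law{(\th^\ast_1,(Y_{k+1})_k)}=\mu^\ast=\law{(\th^\ast_0,(Y_k)_k)}$; iterating and using the Markov-in-random-environment structure to propagate equality of finite-dimensional distributions gives strict-sense stationarity of $(\th^\ast_t,(Y_{k+t})_k)_{t\in\N}$. I expect the main obstacle to be the uniform-in-time control needed to run the coupling: one must ensure that \emph{both} coupled copies are simultaneously inside a fixed compact set at a positive density of times with high probability, uniformly over all starting laws in $\M_b^Y$ — this is precisely where the quantitative drift bound of Corollary \ref{cor:drift} and the definition \eqref{eq:MbY} of $\M_b^Y$ must be combined carefully, and it is the content that Lemmas \ref{lem:coupling} and \ref{lem:annealed} are designed to supply. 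The bookkeeping of keeping the $\Y^\Z$-component fixed throughout the coupling (rather than just its law) is a minor but necessary technical point, handled by coupling the two chains on a common probability space carrying a single copy of $(Y_t)_{t\in\Z}$.
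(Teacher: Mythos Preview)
Your argument is essentially correct and self-contained, but it is \emph{not} the route the paper takes. The paper's proof of this lemma is a one-line citation: it invokes Theorem~3.10 of Gerencs\'er--R\'asonyi \cite{gerencser2022invariant}, which directly gives the total-variation convergence and the invariance of $\mu^\ast$, and then observes that $(\th_t^\ast,(Y_{k+t})_{k\in\Z})_{t\in\N}$ is a time-homogeneous Markov process with one-dimensional law constant equal to $\mu^\ast$, whence strict stationarity. Your proposal instead reconstructs the convergence from the coupling machinery that the paper develops \emph{after} this lemma (Lemmas~\ref{lem:coupling} and~\ref{lem:annealed}); there is no circularity, since those lemmas do not rely on Lemma~\ref{lem:gbrm}. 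What you gain is a self-contained argument that avoids the external reference; what the paper gains is brevity and a clean separation of concerns (existence of $\mu^\ast$ is imported, and the quantitative coupling is then used only for the rate and mixing results).

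One genuine gap to flag: your appeal to Corollary~\ref{cor:drift} and Lemma~\ref{lem:annealed} presupposes $\E(V(\th_0'))<\infty$, but $\M_b^Y$ is defined only by the uniform tightness condition \eqref{eq:MbY}, which is strictly weaker. As written, your coupling bound $\frac{\E(V(\th_1))+\E(V(\th_2))+3}{2}e^{-\ka n}$ is vacuous for initial laws in $\M_b^Y$ with infinite $V$-moment. The fix is easy: given $\mu\in\M_b^Y$ and $\delta>0$, use tightness at time $0$ to find $R$ with $\P(\lVert\th_0'\rVert>R)<\delta$, couple the conditioned (bounded) initial value to the reference chain via your argument, and absorb the $\delta$-remainder; letting $\delta\downarrow 0$ recovers convergence for all of $\M_b^Y$. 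Alternatively, simply note that after one step the $\th$-marginal has finite $V$-moment uniformly over any tight family (by the explicit Gaussian formula in Lemma~\ref{lem-qexp-drift} integrated against a tight law), so the issue disappears after a one-step burn-in. Either patch completes your argument.
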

\begin{proof}
The statement follows from R\'asonyi and Gerencs\'er's recent result, Theorem 3.10. in \cite{gerencser2022invariant}. They also prove that $\law{(\th_t^\ast,(Y_{k+t})_{k\in\Z})}=\mu^\ast$ for each $t\in\N$. Since $(\th^{*}_t,(Y_{t+k})_{k\in\Z})$, $t\in\N$ is a time-homogeneous Markovian process, strong stationarity follows. 
\end{proof}

\begin{remark}\label{rem:V0bd}
	{\rm By Corollary \ref{cor:drift} and Lemma \ref{lem:gbrm}, for any arbitrary but deterministic $\th_0\in\X$, we have 
	$$
	\E (V(\th_0^\ast)) = \lim_{\Sigma\to\infty} \E (\min (\Sigma,V(\th_0^\ast))) = 
	\lim_{\Sigma\to\infty} \lim_{t\to\infty} \E (\min (\Sigma,V(\th_t)))
	\le V(\th_0)+\frac{C}{1-\ga},
	$$
	and thus by the strong stationarity of $(Y_t)_{t\in\Z}$, immediately follows that $\mu^\ast\in\M_b^Y$.
	}
\end{remark}

\begin{remark}\label{rem:truquet}
{\rm With the above form of the drift and minorization condition in hand and using a recent result of Truquet's (Theorem 1 in \cite{Truquet1}), we could as well deduced the existence of a stationary process $(\th_t^\ast)_{t\in\Z}$ satisfying
$$
	\P (\th_{t+1}^\ast\in A\mid (\th_{i}^\ast)_{i\le t}, (Y_j)_{j\in\Z}) = Q(Y_t,\th_{t}^\ast,A),\,\,A\in\B (\X),\,\,t\in\Z.
$$
However, we will need a bit more. We aim to  show that there is a coupling between the iterations $(\th_t)_{t\in\N}$ initialized with the deterministic value $\th_{0}\in\X$ and an appropriate version of $(\th_t^\ast)_{t\in\Z}$. That's why we preferred the technology presented in \cite{gerencser2022invariant}.

It is also shown in \cite{Truquet1} that the distribution of $(\th_t^\ast)_{t\in\Z}$ is unique, moreover the process $(\th_t^\ast, Y_t)_{t\in\Z}$ is ergodic provided that $(Y_t)_{t\in\Z}$ is ergodic. The latter will be very important for us, since the proof of Theorem \ref{thm:CLT} relies on this result.	
	
In addition, Truquet proved that under a milder form of the drift and minorization conditions (See Assumptions A2 and A3 in \cite{Truquet1}), $\law{\th_t}\to\law{\th_0^\ast}$ in total variation as $t\to\infty$. However, as Truquet remarked, assumptions of \cite{Truquet1} did not to get a rate of convergence for $\law{\th_t}$.
}
\end{remark}

In the rest of this subsection, we show an alternative approach to a bit stronger result on the convergence of $(\law{\th_t})_{t\in\N}$ using the results of the recent paper \cite{balazs}. The reader can skip this part without affecting the understanding. The $(1+V)$-weighted total variation distance for any pair of Borel probability measures $\mu,\nu$ on $\B(\X)$ is defined by
$$
\dtv^{1+V}(\mu,\nu):= \int_{\X} (1+V(\th))|\mu-\nu|(\dint \th).
$$

\begin{lemma}\label{lem:weightedtv}
There exist constants $c_1,c_2>0$ such that for $V(\th) =e^{\frac{a}{2}\lVert\th\rVert^2}$,
\begin{equation*}
	\dtv^{1+V}(\law{\th_n},\law{\th_{0}^\ast})\le c_1 e^{-c_2 n},\,n\in\N.
\end{equation*}	
\end{lemma}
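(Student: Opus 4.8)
The plan is to obtain Lemma~\ref{lem:weightedtv} as a direct consequence of the quantitative ergodic theory for Markov chains in a random environment developed in \cite{balazs}, exactly as Lemma~\ref{lem:gbrm} was deduced from \cite{gerencser2022invariant}. The essential point is that the hypotheses required there are precisely the drift condition of Lemma~\ref{lem-qexp-drift} together with the minorization condition of Lemma~\ref{lem:minor}, both of which we have already established uniformly over the admissible environment values $\lVert y\rVert\le M$, and which combine to give a geometric drift toward a small set that is uniform in the environment.

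First I would fix $V(\th)=e^{\frac{a}{2}\lVert\th\rVert^2}$ with $a$ as in Lemma~\ref{lem-qexp-drift}; note that then $V^2$ is the Lyapunov function of Lemma~\ref{lem-qexp-drift}, so by that lemma (applied with the exponent $a$, not $a/2$) and Jensen's inequality one gets $[Q(y)V](\th)\le \sqrt{[Q(y)V^2](\th)}\le\sqrt{\ga V^2(\th)+C}\le\ga'V(\th)+C'$ for suitable $\ga'\in(0,1)$, $C'\ge 1$, uniformly in $\lVert y\rVert\le M$; hence the sublevel sets $\{V\le d\}=\{\lVert\th\rVert\le R\}$ are exactly the compact sets on which Lemma~\ref{lem:minor} provides a minorization $Q(y,\th,\cdot)\ge\tilde\alpha_R\nu_R(\cdot)$ uniform in the environment. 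Next I would quote the relevant theorem of \cite{balazs} — the analogue of the classical geometric ergodicity theorem of \cite{mt}, but in the random-environment (quenched/annealed) setting — which under these two conditions yields a geometric rate in $\dtv^{1+V}$ for the convergence of $\law{\th_t}$ (started from a deterministic, hence $V$-integrable, $\th_0$) to the stationary law $\law{\th_0^\ast}$ identified in Lemma~\ref{lem:gbrm}. Finally I would record that the constants $c_1,c_2$ depend only on $\ga',C',\tilde\alpha_R,R$ and $\E V(\th_0)$, all of which depend only on $\la,\Delta,b,K,M$.

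The only genuine obstacle is bookkeeping rather than mathematics: one must check that the hypotheses of the cited theorem in \cite{balazs} are stated in a form that matches what we have — in particular that the drift and minorization are required only for environment values in the (almost-sure) range of $Y_0$, that no further regularity of the environment process (such as mixing) is needed for this total-variation statement, and that the weight function there may be taken to be our $V$ (a theorem phrased for a $W$-norm with $W=1+V$ is what we want). Care is also needed with the precise normalization of the weighted total-variation distance, since conventions differ by a factor and by whether one writes $1+V$ or $V$; I would reconcile this once and state the result with our definition $\dtv^{1+V}(\mu,\nu)=\int_\X(1+V)\,|\mu-\nu|(\dint\th)$. Modulo these matching checks, the proof is a one-line invocation, and I would present it as such, the same way Lemma~\ref{lem:gbrm} invokes \cite{gerencser2022invariant}.
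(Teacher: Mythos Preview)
Your proposal is correct and follows essentially the same route as the paper: verify that the drift condition (for $V^{2}$, hence for $V$) and the uniform minorization on compacts, together with the finiteness of $\E(V(\th_{0})^{2}+V(\th_{1})^{2})$, place us in the setting of \cite{balazs}, and then invoke the relevant theorem there (Theorem~2.11) to obtain geometric convergence in $\dtv^{1+V}$. The paper's own proof is in fact even terser than yours; your Jensen step making the drift for $V$ explicit and your remarks on matching normalizations are reasonable elaborations but not additional ideas.
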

\begin{proof}
With the above choice of $V$, we have $\E (V(\th_{0})^2+V(\th_1)^2)<\infty$ hence the \emph{moment condition on initial values} i.e. Assumption 2.6. in \cite{balazs} is in force, and also the other assumptions of \cite{balazs} are clearly met (with the quantities $\lambda,\alpha,K$ constant
and with $\ell\equiv 0$ since $Y$ is bounded) too. Hence Theorem 2.11 of \cite{balazs} implies the convergence of $\law{\th_t}$ towards the limiting distribution $\law{\th_{0}^\ast}$ at a geometric rate in $\dtv^{1+V}$.  	
\end{proof}

% Gerencsér cikkből mi jön ki (momentumok konvergenciája, L^p konvergencia + ráta)
\begin{corollary}\label{cor:moments}
It is clear from the definition of $d^{1+V}$ and from Lemma \ref{lem:weightedtv} that for any $\phi$ satisfying \eqref{eq:polygrowth},
$$
\E (\phi (\th_n))\to\E (\phi (\th_0^\ast)),\,\,n\to\infty.
$$
In particular, $\E (\lVert \th_n\rVert^p)\to\E (\lVert \th_0^\ast\rVert^p)$, as $n\to\infty$, for every $1\le p<\infty$.
\end{corollary}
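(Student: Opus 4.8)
The plan is to deduce the corollary directly from Lemma \ref{lem:weightedtv} by comparing $|\phi|$ with the weight $1+V$. The one substantive observation is that, with $V(\th)=e^{\frac{a}{2}\lVert\th\rVert^2}$ as in Lemma \ref{lem:weightedtv}, the function $V$ grows faster than any polynomial in $\lVert\th\rVert$, so that
$$
c_3:=\sup_{\th\in\X}\frac{1+\lVert\th\rVert^r}{1+V(\th)}<\infty,
$$
and hence, by \eqref{eq:polygrowth}, $|\phi(\th)|\le c_\phi c_3\,(1+V(\th))$ for all $\th\in\X$.

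With this domination in hand, I would write, using that $\phi$ is measurable and that the exponential moments $\E(V(\th_n))$ and $\E(V(\th_0^\ast))$ are finite (the former by Corollary \ref{cor:drift}, the latter by Remark \ref{rem:V0bd}, since $e^{\frac{a}{2}\lVert\th\rVert^2}\le e^{a\lVert\th\rVert^2}$),
$$
\bigl|\E(\phi(\th_n))-\E(\phi(\th_0^\ast))\bigr|
=\Bigl|\int_\X\phi\,\dint(\law{\th_n}-\law{\th_0^\ast})\Bigr|
\le\int_\X|\phi|\,\dint|\law{\th_n}-\law{\th_0^\ast}|
\le c_\phi c_3\,\dtv^{1+V}(\law{\th_n},\law{\th_0^\ast}).
$$
Lemma \ref{lem:weightedtv} then bounds the right-hand side by $c_\phi c_3 c_1 e^{-c_2 n}$, which tends to $0$ as $n\to\infty$; this proves the first assertion (and in fact yields a quantitative exponential rate, which is reused in the remark following Theorem \ref{thm:CLT}). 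The ``in particular'' statement follows by taking $\phi(\th)=\lVert\th\rVert^p$, which satisfies \eqref{eq:polygrowth} with $r=p$ and $c_\phi=1$.

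There is essentially no obstacle here: the only steps requiring any care are the finiteness of $c_3$ (immediate from the super-polynomial growth of $V$) and of the exponential moments of $\th_n$ and $\th_0^\ast$ (already established in Corollary \ref{cor:drift} and Remark \ref{rem:V0bd}), together with the elementary inequality $\bigl|\int_\X\phi\,\dint(\mu-\nu)\bigr|\le\int_\X|\phi|\,\dint|\mu-\nu|$ valid for the signed measure $\mu-\nu$ whenever $|\phi|$ is $|\mu-\nu|$-integrable.
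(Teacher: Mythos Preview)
Your proof is correct and is precisely the intended elaboration: the paper does not give a separate proof but embeds the reasoning in the statement itself (``It is clear from the definition of $d^{1+V}$ and from Lemma \ref{lem:weightedtv} that\ldots''), and your argument---dominating $|\phi|$ by a constant times $1+V$ via the super-polynomial growth of $V$, then invoking the definition of $\dtv^{1+V}$ and Lemma \ref{lem:weightedtv}---is exactly what this sentence unpacks to. Your care in noting the two different $V$'s (with exponents $a$ and $a/2$) and the finiteness of the relevant exponential moments is appropriate and matches the paper's setup.
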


\subsection{Coupling construction}

Let $R>0$ which we fix later, and $(\eps_t)_{t\in\Z}$ be a sequence of i.i.d. uniform variables on $[0,1]$ independent of $(Y_k)_{k\in\Z}$ and also independent of $(\xi_n)_{n\ge 1}$. The next Lemma is a standard representation results for parametric kernels satisfying the minorization condition 
\eqref{eq:minor}.
\begin{lemma}\label{lem:T}
Under the minorization condition (c.f. \eqref{eq:minor} in Lemma \ref{lem:minor}), there exists a measurable function $T:\Y\times\X\times [0,1]\to\X$ such that
$$
\P (T (y,\th,\eps_0)\in\A) = Q(y,\th,A),
$$
for all $\th\in\X$, $A\in\B(\X)$ and $y\in\Y$ such that $\lVert y\rVert\le M$. Furthermore, for $u\in [0,\tilde{\alpha}_R]$,
$$
T (y,\th_1,u) = T (y,\th_2,u),\,\,\lVert y\rVert\le M,\,\th_1,\th_2\in\{\th\mid \lVert\th\rVert\le R\}.
$$
\end{lemma}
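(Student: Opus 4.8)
The plan is to build the function $T$ by the classical "split the kernel" construction. By the minorization condition \eqref{eq:minor}, for $\lVert y\rVert\le M$ and $\lVert\th\rVert\le R$ we may decompose
$$
Q(y,\th,A) = \tilde\alpha_R\,\nu_R(A) + (1-\tilde\alpha_R)\,R_R(y,\th,A),
$$
where $R_R(y,\th,A):=\bigl(Q(y,\th,A)-\tilde\alpha_R\nu_R(A)\bigr)/(1-\tilde\alpha_R)$ is, for each such $(y,\th)$, a genuine probability measure (nonnegativity is exactly \eqref{eq:minor}, total mass one is clear), and it is jointly measurable in $(y,\th)$. For $\lVert\th\rVert>R$ or $\lVert y\rVert>M$ we simply set $R_R(y,\th,\cdot):=Q(y,\th,\cdot)$, so that the identity $Q=\tilde\alpha_R\mathbf{1}_{\{\lVert y\rVert\le M,\lVert\th\rVert\le R\}}\nu_R+(\cdots)$ still bookkeeps correctly; the point is only to have, for every $(y,\th)$ in the relevant region, a measurable family of probability measures.

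Next I would invoke the standard fact that a Borel probability kernel on a Polish space can be represented by a measurable function of a single uniform random variable: there is a measurable $G:\Y\times\X\times[0,1]\to\X$ with $\P(G(y,\th,\eps_0)\in A)=R_R(y,\th,A)$ for all $A\in\B(\X)$ — e.g. via generalized inverse of the one-dimensional CDFs obtained from a measurable isomorphism $\X\cong\R$, or by the Blackwell–Dubins/Skorokhod representation. Likewise fix a measurable $g:[0,1]\to\X$ with $\P(g(\eps_0)\in A)=\nu_R(A)$ (using that $\eps_0$ is uniform). Now define
$$
T(y,\th,u):=
\begin{cases}
g\!\left(u/\tilde\alpha_R\right), & 0\le u\le\tilde\alpha_R,\ \lVert y\rVert\le M,\ \lVert\th\rVert\le R,\\[1mm]
G\!\left(y,\th,(u-\tilde\alpha_R)/(1-\tilde\alpha_R)\right), & \tilde\alpha_R<u\le 1,\ \lVert y\rVert\le M,\\[1mm]
G\!\left(y,\th,u\right), & \text{otherwise.}
\end{cases}
$$
Since $\eps_0$ is uniform on $[0,1]$, the rescaled variables $u/\tilde\alpha_R$ on $\{u\le\tilde\alpha_R\}$ and $(u-\tilde\alpha_R)/(1-\tilde\alpha_R)$ on $\{u>\tilde\alpha_R\}$ are again uniform, independent of the event $\{\eps_0\le\tilde\alpha_R\}$ which itself has probability $\tilde\alpha_R$. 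Hence for $\lVert y\rVert\le M$ and $\lVert\th\rVert\le R$,
$$
\P(T(y,\th,\eps_0)\in A)=\tilde\alpha_R\,\nu_R(A)+(1-\tilde\alpha_R)R_R(y,\th,A)=Q(y,\th,A),
$$
and the same computation (using the third branch) gives it for $\lVert\th\rVert>R$. Measurability of $T$ is immediate from measurability of $g$, $G$ and of the defining regions. Finally, the coalescence property is built in: for $u\in[0,\tilde\alpha_R]$ and $\lVert y\rVert\le M$, $\lVert\th_1\rVert,\lVert\th_2\rVert\le R$, the value $T(y,\th_i,u)=g(u/\tilde\alpha_R)$ does not involve $\th_i$ at all, so $T(y,\th_1,u)=T(y,\th_2,u)$.

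The only genuinely nontrivial ingredient is the measurable representation of the kernel $R_R$ by a single $[0,1]$-uniform input, jointly measurable in the parameters $(y,\th)$; this is the step where one must be a little careful, but it is entirely standard for kernels on Polish spaces (transport the problem to $\R$ via a Borel isomorphism and use the measurable dependence of quantile functions on parameters, as in e.g.\ the discussion of iterated random functions in \cite{lovas,tikosi2021convergence}). Everything else is bookkeeping with the uniform variable.
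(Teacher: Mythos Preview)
The paper itself gives no argument here; it simply cites Lemma~7.1 of \cite{lovas}. Your explicit Nummelin-split construction is exactly the standard proof that underlies that citation, and the overall strategy is correct.

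There is, however, a small but genuine slip in your piecewise definition of $T$. As written, the second branch is triggered whenever $\tilde\alpha_R<u\le 1$ and $\lVert y\rVert\le M$, with no restriction on $\lVert\th\rVert$. Consequently, for $\lVert y\rVert\le M$ and $\lVert\th\rVert>R$ the point $(y,\th,u)$ lands in the third branch when $u\le\tilde\alpha_R$ (giving $G(y,\th,u)$) but in the second branch when $u>\tilde\alpha_R$ (giving $G\bigl(y,\th,(u-\tilde\alpha_R)/(1-\tilde\alpha_R)\bigr)$). The argument fed into $G$ is then \emph{not} uniform on $[0,1]$, so $\P(T(y,\th,\eps_0)\in A)\neq Q(y,\th,A)$ for such $\th$, contradicting the first claim of the lemma (which is required for all $\th\in\X$). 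The fix is trivial: add the condition $\lVert\th\rVert\le R$ to the second branch as well, so that the third branch handles all $u\in[0,1]$ whenever $\lVert\th\rVert>R$ (where you have set $R_R=Q$). With that correction your argument is complete and matches what the cited reference does.
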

\begin{proof}
	For the proof, we refer the reader to Lemma 7.1 in \cite{lovas}.
\end{proof}

We drop the dependence of the mappings T on $\eps_t$ in the notation and will simply
write $T_t(y)\th:= T(\th, y, \eps_t)$. For $s\in\Z$ and $\th\in\X$, define the family of auxiliary
processes
\begin{equation}\label{eq:Z}
	Z_{s,t}^{\th,\mathbf{y}} = \th,\,t\le s,
	\quad 
	Z_{s,t}^{\th,\mathbf{y}} = T_t(y_{t-1})Z_{s,{t-1}}^{\th,\mathbf{y}},\,t>s,
\end{equation}
where $\mathbf{y}=(\ldots,y_{-1},y_0,y_1,\ldots)\in\Y^\Z$ is a fixed trajectory. Clearly, for any random variable $(\th'_0,(Y_k)_{k\in\Z})$ and $s\in\N$, $Z_{s,t}^{\th'_s,\mathbf{Y}}$, $t\ge s$ is a version of the process $(\th'_t)_{t\in\N}$ defined through the iterative scheme \eqref{eq:theta}, starting from $\th'_0$ and driven by $(Y_k)_{k\in\Z}$.
Furthermore, the process $Z_{s,t}^{\th_0,\mathbf{y}}$, $t\ge s$ is a time-inhomogeneous Markov chain that follows the dynamics of $\theta_t$, $t\in\N$ with the environment being ''frozen''. Since the process $(Y_k)_{k\in\Z}$ is almost surely bounded by $M>0$, we can restrict ourselves to trajectories $\mathbf{y}\in\Y^\Z$ satisfying $\sup_{k\in\Z}\lVert y_k\rVert\le M$, and thus $Z_{s,t}^{\th_0,\mathbf{y}}$, $t\ge s$ is a \emph{Harris recurrent} chain. The next lemma controls the coupling time between processes starting from different initial values.
\begin{lemma}\label{lem:coupling}
	Let $\th_1,\th_2\in\X$ be arbitrary but fixed and $\mathbf{y}\in\Y^\Z$ such that $\sup_{k\in\Z}\lVert y_k\rVert\le M$. Then there exists constants $\ka>0$ and $N\in\N$
	depending only on $\la$, $\Delta$, $b$, $K$ and $M$ such that for $n\ge N$,
	$$
	\P (Z_{0,n}^{\th_1,\mathbf{y}}\ne Z_{0,n}^{\th_2,\mathbf{y}})\le
	\frac{V(\th_1)+V(\th_2)+3}{2}e^{-\ka n}.
	$$
\end{lemma}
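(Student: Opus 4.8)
The plan is to run a standard coupling argument on the frozen chains $Z_{0,t}^{\th_1,\mathbf{y}}$ and $Z_{0,t}^{\th_2,\mathbf{y}}$, using that both are driven by the \emph{same} randomness $(\eps_t)_{t\in\Z}$ (and the same $(\xi_n)$) through the maps $T_t(y_{t-1})$ of Lemma \ref{lem:T}. By the second property in Lemma \ref{lem:T}, once both chains are simultaneously inside the ball $B_R:=\{\lVert\th\rVert\le R\}$ at some time $t$, they coalesce forever with probability at least $\tilde\alpha_R$ at step $t+1$ (namely on the event $\eps_{t+1}\le\tilde\alpha_R$), since then $T_{t+1}(y_t)$ maps both to the same point. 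So the coupling time is dominated by the first time both chains sit in $B_R$ and then a geometric number of extra attempts succeed. The work is therefore to show that ``both chains in $B_R$'' happens often enough, with overwhelming probability, on a time scale that does not depend on $\th_1,\th_2$ beyond the $V(\th_i)$ prefactor.

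First I would fix $R$ large. Using the drift inequality of Corollary \ref{cor:drift} applied along the fixed (bounded) trajectory $\mathbf{y}$, together with the Markov inequality, I would show that for each chain $\P(Z_{0,t}^{\th_i,\mathbf{y}}\notin B_R)\le \gamma^t V(\th_i) + \tfrac{C}{(1-\gamma)}\,e^{-aR^2}$ (the constant from the drift bound on $V$), hence for $R$ chosen so that $C/((1-\gamma))\,e^{-aR^2}$ is a small fixed number $\eta<1$, and for $t$ past a threshold $N_1$ depending only on $\la,\Delta,b,K,M$ (so that $\gamma^t V(\th_i)$ is also controlled — here the $V(\th_i)$ factor gets absorbed into the stated prefactor via $\gamma^t V(\th_i)\le e^{-\ka t}(V(\th_1)+V(\th_2))$ for suitable $\ka<-\log\gamma$), one has $\P(\text{chain } i\notin B_R)$ small. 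A union bound then gives $\P(\text{not both in }B_R \text{ at time }t)\le q<1$ for a fixed $q$, uniformly in $t\ge N_1$ and in the initial points (up to the $V$-prefactor). Next I would exploit the conditional independence structure: on the filtration generated by $(\eps_s,\xi_s)$ up to time $t$, the event ``both chains coalesce at some step in $(t,t+2]$'' has probability bounded below, because (i) with probability $\ge 1-q'$ both chains lie in $B_R$ at time $t+1$ (drift again, and the fact that one step from \emph{anywhere} lands in $B_R$ with positive probability bounded below — readable off the Gaussian density, exactly as in the minorization computation preceding Lemma \ref{lem:minor}, as long as the pre-images stay in a fixed ball, which the drift guarantees with high probability), and (ii) given that, $\eps_{t+2}\le\tilde\alpha_R$ forces coalescence. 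Iterating this over disjoint blocks of bounded length, the probability of \emph{not} having coalesced after $k$ blocks decays geometrically in $k$, i.e. $\P(Z_{0,n}^{\th_1,\mathbf{y}}\ne Z_{0,n}^{\th_2,\mathbf{y}})\le \beta^{\lfloor (n-N)/L\rfloor}\cdot(\text{prefactor})$ for fixed $\beta<1$ and block length $L$, which is of the claimed form $\tfrac{V(\th_1)+V(\th_2)+3}{2}e^{-\ka n}$ after adjusting $\ka$ and $N$; the additive $3$ and the $1/2$ come from bounding $1\le V$ and collecting constants.

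To make the block argument rigorous I would use the strong Markov property of the (time-inhomogeneous) frozen chain together with the fact that the per-block coalescence lower bound $\beta_0>0$ is \emph{uniform} over all starting configurations $(\th_1',\th_2')\in\X^2$: even if the chains are far out at the start of a block, the drift pushes them into $B_R$ within a bounded number of steps with probability bounded below, and this bound does not degrade because $\mathbf{y}$ is uniformly bounded by $M$. Formally one writes $\P(\text{no coalescence by end of block }j+1 \mid \F_{jL}) \le 1-\beta_0$ on the no-coalescence event of block $j$, then takes expectations and iterates; the only place the initial points enter quantitatively is the very first block, through $V(\th_1)+V(\th_2)$ in the drift estimate, which is why the prefactor has that shape.

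The main obstacle I anticipate is getting the \emph{uniform-in-starting-point} lower bound $\beta_0$ on per-block coalescence, together with cleanly absorbing the $V(\th_i)$ growth into a clean exponential prefactor. The subtlety is that when $\th_1,\th_2$ are large the chains may take many steps to both enter $B_R$, so the naive bound ``both in $B_R$ at a fixed time'' fails initially; the fix is that the drift contraction $\gamma^t V(\th_i)$ is itself exponentially small, so one only needs $t\gtrsim \log(V(\th_i))$, and for $t$ below that threshold the trivial bound $\P(\cdot\ne\cdot)\le 1 \le \tfrac{V(\th_1)+V(\th_2)+3}{2}e^{-\ka t}$ already holds provided $\ka$ is small enough relative to $a$ — which is exactly the interplay that forces the stated dependence of $\ka$ on $\la,\Delta,b,K,M$. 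Everything else (the Gaussian-density lower bounds, the drift iteration) is a routine computation of the same flavour as Lemmas \ref{lem-qexp-drift} and \ref{lem:minor}.
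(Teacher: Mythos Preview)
Your overall strategy is sound, but the argument has a genuine gap at the step you yourself flag as the ``main obstacle'': the claimed \emph{uniform-in-starting-point} per-block coalescence bound $\beta_0>0$ is false. Concretely, you assert that ``even if the chains are far out at the start of a block, the drift pushes them into $B_R$ within a bounded number of steps with probability bounded below.'' But for a fixed block length $L$ this fails: from the drift computation in Lemma~\ref{lem-qexp-drift} one has $\lVert\th-\la H(\th,y)\rVert^2\le c\lVert\th\rVert^2+c'$ with $c<1$, so after $L$ steps the norm is still of order $c^{L/2}\lVert\th_0'\rVert$, and the Gaussian tail needed to land in $B_R$ from there decays like $\exp(-c''\lVert\th_0'\rVert^2)$. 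Hence there is no $L,\beta_0$ with $\P(\text{both in }B_R\text{ within }L\text{ steps}\mid \th_1',\th_2')\ge\beta_0$ uniformly in $(\th_1',\th_2')\in\X^2$. Your fix in the last paragraph---absorbing the growth into the prefactor via the trivial bound $1\le\tfrac{V(\th_1)+V(\th_2)+3}{2}e^{-\ka t}$---only covers the \emph{initial} pair $(\th_1,\th_2)$. It does not address the random configuration $(Z_{0,jL}^{\th_1,\mathbf y},Z_{0,jL}^{\th_2,\mathbf y})$ at the start of block $j+1$, which is $\F_{jL}^\eps$-measurable and unbounded; the conditional inequality $\P(\text{no coalescence in block }j+1\mid\F_{jL}^\eps)\le 1-\beta_0$ simply does not hold almost surely.

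The paper avoids this precisely by replacing fixed blocks with \emph{random} ones: it introduces the successive hitting times $\sigma_k$ of $\{\lVert\overline Z\rVert\le R\}$ and uses the drift inequality to prove that the increments $\sigma_{k+1}-\sigma_k$ have geometric tails \emph{conditionally on $\overline Z_{\sigma_k}$}, uniformly for $k\ge 1$ (because $\lVert\overline Z_{\sigma_k}\rVert\le R$ by construction). The initial pair $(\th_1,\th_2)$ enters only through the tail of $\sigma_1$, which carries the factor $V(\th_1)+V(\th_2)$. One then shows that, with high probability, $\sigma_{m_n}<n$ for $m_n$ a fixed fraction of $n$, and on that event the $m_n$ independent uniforms $\eps_{\sigma_j+1}$ give $m_n$ independent chances to coalesce. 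If you want to keep a block structure, you would need to track a joint Lyapunov quantity combining $V(Z_{0,t}^{\th_1})+V(Z_{0,t}^{\th_2})$ with the no-coalescence indicator (in the spirit of Hairer--Mattingly), rather than appeal to a uniform $\beta_0$.
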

\begin{proof}
	First, we fix $\ga<\ga'<1$ and choose $R>0$ so large such that $2C<(\ga'-\ga)e^{\frac{a}{2}R^2}$. Furthermore, we introduce the notations $\overline{Z}_n:=\left(Z_{0,n}^{\th_1,\mathbf{y}}, Z_{0,n}^{\th_2,\mathbf{y}}\right)$, $\lVert \overline{Z}_n\rVert:=\max \left( \left\lVert Z_{0,n}^{\th_1,\mathbf{y}} \right\rVert,
	\left\lVert Z_{0,n}^{\th_2,\mathbf{y}} \right\rVert\right)$ and the sequence of successive visiting times
	$$
	\sigma_0:=0,\,\sigma_{k+1} = \min\left\{t>\sigma_k\middle|  \lVert \overline{Z}_t\rVert\le R
	\right\},\,k\in\N
	$$
	that are obviously $\sigma (\eps_t,t\in\Z)$-stopping times. Note that on $\{ \lVert \overline{Z}_t\rVert>R\}$ we have 
	$$
	\ga (V( Z_{0,t}^{\th_1,\mathbf{y}})+V( Z_{0,t}^{\th_2,\mathbf{y}}))+2C 
	\le
	\ga' (V( Z_{0,t}^{\th_1,\mathbf{y}})+V( Z_{0,t}^{\th_2,\mathbf{y}}))
	$$ 
	and thus for $k\ge 1$ and $s\ge 0$, we obtain
	\begin{align*}
		\P (\sigma_{k+1}-\sigma_{k}>s\mid \overline{Z}_{\sigma_k})
		&\le 
		\E \left(
		\P (\lVert \overline{Z}_{\sigma_k+s}\rVert>R\mid \overline{Z}_{\sigma_{k}+s-1})
		\prod_{j=1}^{s-1}
		\ind_{\lVert \overline{Z}_{\sigma_k+j}\rVert>R}
		\middle| \overline{Z}_{\sigma_k}\right)\\
		&\le 
		\ga'
			\E \left(
		\frac{V( Z_{0,\sigma_k+s-1}^{\th_1,\mathbf{y}})+V( Z_{0,\sigma_k+s-1}^{\th_2,\mathbf{y}})}{e^{\frac{a}{2}R^2}}
		\prod_{j=1}^{s-2}
		\ind_{\lVert \overline{Z}_{\sigma_k+j}\rVert>R}
		\middle| \overline{Z}_{\sigma_k}\right)
	\end{align*}
	Iteration of this argument leads to the following estimation.
	\begin{align*}
	\P (\sigma_{k+1}-\sigma_{k}>s\mid \overline{Z}_{\sigma_k}) 
	&\le 
	(\ga')^{s-1}\frac{\ga V( Z_{0,\sigma_k}^{\th_1,\mathbf{y}})+\ga V( Z_{0,\sigma_k}^{\th_2,\mathbf{y}})+2C}{e^{\frac{a}{2}R^2}} \\
	&\le 
	(\ga')^{s-1} \frac{2\ga e^{\frac{a}{2}R^2}+2C}{e^{\frac{a}{2}R^2}}
	\le
	(\ga')^{s-1} (\ga'+\ga)\le 2(\ga')^s. 
	\end{align*}
	Along similar lines, we can show that 
	$$
	\P (\sigma_1>s)\le (\ga')^{s} \left[e^{-\frac{a}{2}R^2}(V(\th_1)+V(\th_2)) + 1-\frac{\ga}{\ga'}
	\right]\le (e^{-\frac{a}{2}R^2}(V(\th_1)+V(\th_2)) + 1)(\ga')^{s}.
	$$
	Let us fix $\ga''$ such that $\ga'<\ga''<1$. For the generating function of the time elapsed between the $k$th and $(k+1)$th visits, we get
	\begin{align*}
	\E \left(\frac{1}{(\ga'')^{\sigma_{k+1}-\sigma_k}}\middle| \F_{-\infty,\sigma_k}^\eps\right) = \sum_{j=1}^{\infty}
	\frac{1}{(\ga'')^{j}} \P (\sigma_{k+1}-\sigma_{k}=j\mid \overline{Z}_{\sigma_k}) 
	\le 
	\sum_{j=1}^{\infty} 	\frac{2 (\ga')^{j-1}}{(\ga'')^{j}} = \frac{2}{\ga''-\ga'},\,k\ge 1,
	\end{align*} 
	and similarly, for $k=0$,
	$$
	\E \left(\frac{1}{(\ga'')^{\sigma_{1}}}\right)\le \frac{e^{-\frac{a}{2}R^2}(V(\th_1)+V(\th_2)) + 1}{\ga''-\ga'}
	$$
	hence by the Markov inequality and the tower rule,for $0<m<n$,  we obtain
	\begin{align*}
		\P (\sigma_m\ge n)&\le (\ga'')^n \E \left(\frac{1}{(\ga'')^{\sigma_m}}\right) =
		(\ga'')^n \E \left( \E \left(\frac{1}{(\ga'')^{\sigma_m-\sigma_{m-1}}}
		\middle|\F_{-\infty,\sigma_{m-1}}^\eps 
		\right)  \frac{1}{(\ga'')^{\sigma_{m-1}}}\right)
		\\
		&\le
		(\ga'')^n\frac{2}{\ga''-\ga'}\E \left(\frac{1}{(\ga'')^{\sigma_{m-1}}}\right)
		\le\ldots\\
		&\le 
		\frac{e^{-\frac{a}{2}R^2}(V(\th_1)+V(\th_2)) + 1}{2}
		\left(\frac{2^m}{(\ga''-\ga')^m}\right)(\ga'')^n.
	\end{align*}
	Again we fix a constant $\ga'''$ such that $\ga''<\ga'''<1$, and define
	$$
	m_n := \left\lfloor n\frac{\log \ga'''-\log \ga''}{\log 2-\log (\ga''-\ga')}\right\rfloor.
	$$
	Obviously, for $n$ is so large such that $m_n\ge 1$, we have
	$$
	\P (\sigma_{m_n}\ge n)\le \frac{e^{-\frac{a}{2}R^2}(V(\th_1)+V(\th_2)) + 1}{2} (\ga''')^n.
	$$

	Next, we estimate the probability of no-coupling on events when the small set is visited at least $m_n$-times. According to Lemma \ref{lem:T}, for $j=1,\ldots,m_n$, $\th\mapsto T(y,\th,\eps_{\sigma_j+1})$ is constant of the ball $\{\th\mid \lVert\th\rVert\le R\}$ with probability at least $\tilde{\alpha}_R$ hence we can write
	\begin{align*}
		\P (Z_{0,n}^{\th_1,\mathbf{y}}\ne Z_{0,n}^{\th_2,\mathbf{y}},\sigma_{m_n}<n) 
		\le
		\P (\eps_{\sigma_j+1}<\tilde{\alpha}_R;\, j= 1,\ldots, m_n) = \tilde{\alpha}_R^{m_n}, 
	\end{align*}
	where we used that for every $j$, $\eps_{\sigma_j+1}$ is independent of $\F_{-\infty,\sigma_j}^\eps$.
	
	At least, we combine this estimate with that one what we got for the tail probability of the visiting times, and obtain
	\begin{align*}
		\P (Z_{0,n}^{\th_1,\mathbf{y}}\ne Z_{0,n}^{\th_2,\mathbf{y}})
		&\le 
		\P (Z_{0,n}^{\th_1,\mathbf{y}}\ne Z_{0,n}^{\th_2,\mathbf{y}},\sigma_{m_n}<n)
		+
		\P (\sigma_{m_n}\ge n)\\
		& \le \tilde{\alpha}_R^{m_n} + \frac{e^{-\frac{a}{2}R^2}(V(\th_1)+V(\th_2)) + 1}{2} (\ga''')^n
	\end{align*}
	which completes the proof.
	
\end{proof}

The following annealed version of Lemma \ref{lem:coupling} will be important later.
\begin{lemma}\label{lem:annealed}
Let $\theta_{1},\theta_{2}$ be random variables independent of $\mathcal{F}^{\eps}_{m+1,\infty}$ for some $m\in\N$. Then
$$
\P (Z_{m,n}^{\th_1,\mathbf{Y}}\ne Z_{m,n}^{\th_2,\mathbf{Y}})\le
\frac{\E(V(\th_1))+\E(V(\th_2))+3}{2}e^{-\ka (n-m)},\,\,n\ge N+m.
$$
\end{lemma}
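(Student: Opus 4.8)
The plan is to deduce Lemma \ref{lem:annealed} from Lemma \ref{lem:coupling} by conditioning on the environment and on the initial values, then integrating. Concretely, the processes $Z_{m,n}^{\th_i,\mathbf{Y}}$ for $i=1,2$ are built from the frozen-environment dynamics of \eqref{eq:Z} using the innovations $(\eps_t)_{t>m}$, and Lemma \ref{lem:coupling} is precisely a quenched statement: for every fixed trajectory $\mathbf{y}$ with $\sup_k\lVert y_k\rVert\le M$ and every fixed pair of starting points $\vartheta_1,\vartheta_2\in\X$,
$$
\P\bigl(Z_{m,n}^{\vartheta_1,\mathbf{y}}\ne Z_{m,n}^{\vartheta_2,\mathbf{y}}\bigr)\le
\frac{V(\vartheta_1)+V(\vartheta_2)+3}{2}\,e^{-\ka(n-m)},\qquad n\ge N+m,
$$
where the probability is only over $(\eps_t)_{t>m}$. (The shift from starting index $0$ to starting index $m$ is immaterial, since $(\eps_{m+t})_{t\ge 1}$ has the same law as $(\eps_t)_{t\ge 1}$ and the constants $\ka,N$ in Lemma \ref{lem:coupling} depend only on $\la,\Delta,b,K,M$; one can either rerun the proof verbatim with the clock shifted or invoke stationarity of the i.i.d. sequence $(\eps_t)$.)

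The key step is to set up the independence bookkeeping. By hypothesis $(\th_1,\th_2)$ is independent of $\F^{\eps}_{m+1,\infty}=\sigma(\eps_t,t\ge m+1)$; also $(\eps_t)_{t\in\Z}$ is independent of $(Y_k)_{k\in\Z}$ by construction, and $\mathbf{Y}$ is almost surely a trajectory with $\sup_k\lVert Y_k\rVert\le M$ by \eqref{eq:bound} and stationarity. I would condition on the pair $(\mathbf{Y},\th_1,\th_2)$: since the event $\{Z_{m,n}^{\th_1,\mathbf{Y}}\ne Z_{m,n}^{\th_2,\mathbf{Y}}\}$ is a measurable function of $(\mathbf{Y},\th_1,\th_2)$ together with $(\eps_t)_{t>m}$, and the latter block is independent of $(\mathbf{Y},\th_1,\th_2)$, a standard conditioning/freezing lemma gives
$$
\P\bigl(Z_{m,n}^{\th_1,\mathbf{Y}}\ne Z_{m,n}^{\th_2,\mathbf{Y}}\bigm|\mathbf{Y},\th_1,\th_2\bigr)
= g\bigl(\mathbf{Y},\th_1,\th_2\bigr)\quad\text{a.s.},
$$
where $g(\mathbf{y},\vartheta_1,\vartheta_2):=\P(Z_{m,n}^{\vartheta_1,\mathbf{y}}\ne Z_{m,n}^{\vartheta_2,\mathbf{y}})$ is exactly the quenched quantity bounded above. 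On the full-measure event $\{\sup_k\lVert Y_k\rVert\le M\}$ this conditional probability is therefore at most $\tfrac12(V(\th_1)+V(\th_2)+3)e^{-\ka(n-m)}$ for $n\ge N+m$. Taking expectations and using the tower rule together with linearity yields
$$
\P\bigl(Z_{m,n}^{\th_1,\mathbf{Y}}\ne Z_{m,n}^{\th_2,\mathbf{Y}}\bigr)
\le \frac{\E(V(\th_1))+\E(V(\th_2))+3}{2}\,e^{-\ka(n-m)},
$$
which is the claim.

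The main obstacle — really the only subtle point — is making the ``freezing'' step rigorous: one must check that $Z_{m,n}^{\th,\mathbf{Y}}$ is a genuinely measurable function of $(\th,\mathbf{Y})$ and of the finite block $\eps_{m+1},\ldots,\eps_n$ (this is immediate from the recursive definition \eqref{eq:Z} and the measurability of $T$ in Lemma \ref{lem:T}), and that the disintegration of the quenched probability matches the conditional probability, which is the content of the standard lemma that if $(U,W)$ are independent and $f$ is bounded measurable then $\E[f(U,W)\mid U]=\E[f(u,W)]\big|_{u=U}$ a.s. One should also be slightly careful that Lemma \ref{lem:coupling} is stated for deterministic $\th_1,\th_2$, so the disintegration is applied with $U=(\mathbf{Y},\th_1,\th_2)$ and $W=(\eps_t)_{t>m}$; the randomness of the initial values is absorbed into $U$ and only reappears through the deterministic bound $\tfrac12(V(\vartheta_1)+V(\vartheta_2)+3)e^{-\ka(n-m)}$ evaluated at $(\th_1,\th_2)$. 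Everything else is routine, so the lemma follows.
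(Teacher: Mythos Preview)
Your argument is correct and is essentially the paper's own proof: condition on $(\mathbf{Y},\th_1,\th_2)$, reduce to the quenched bound of Lemma \ref{lem:coupling} via the time shift (the paper writes this as $\P(Z_{0,n-m}^{x_1,S^m\mathbf{y}}\ne Z_{0,n-m}^{x_2,S^m\mathbf{y}})$), and then take expectations. You are in fact more explicit than the paper about the measurability and freezing step; the only point both arguments leave tacit is that the applications always have $(\mathbf{Y},\th_1,\th_2)$ jointly independent of $\F^{\eps}_{m+1,\infty}$, not merely $(\th_1,\th_2)$ alone.
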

\begin{proof}
Estimate the conditional probability using Lemma \ref{lem:coupling} above yields
\begin{align*}
\P (Z_{m,n}^{\th_1,\mathbf{Y}}\ne Z_{m,n}^{\th_2,\mathbf{Y}}\mid \mathbf{Y}=\mathbf{y},\,\th_1=x_1,\,\th_2=x_2) 
&=
\P (Z_{m,n}^{x_1,\mathbf{y}}\ne Z_{m,n}^{x_2,\mathbf{y}})
=
\P \left(
Z_{0,n-m}^{x_1,S^m\mathbf{y}}\ne Z_{0,n-m}^{x_2,S^m\mathbf{y}}
\right)
\\
&\le
\frac{V(x_1)+V(x_2)+3}{2}e^{-\ka (n-m)},\,\, n\ge N+m,
\end{align*}
where $S^{m}\mathbf{Y}$ refers to the $m$-times left-shifted trajectory of $Y$ i.e. $\left(S^m\mathbf{Y}\right)_k=Y_{k+m}$, $k\in\Z$. Finally, we take expectations and obtain the claimed inequality.
\end{proof}

\subsection{Mixing properties}

In what follows, we show that mixing properties of the exogenous environment transfer to the process $\th_t$, $t\in\N$. For any system of sub-$\sigma$-algebras $\A_i\subset\F$, $i\in I$, we use the notation $\bigvee_{i\in I}A_i$ for the $\sigma$-algebra generated by the system $(\A_i)_{i\in I}$. 
\begin{lemma}\label{lem:salg}
	Suppose $\A_n$ and $\B_n$, $n=1,2,\ldots$ are sub-$\sigma$-algebras of $\F$ such that the $\sigma$-algebras $\A_n\vee\B_n$, $n=1,2,\ldots$ are pairwise independent. Then
	$$
	\alpha \left(\bigvee_{n=1}^\infty\A_n,\bigvee_{n=1}^\infty\B_n\right)
	\le\sum_{n=1}^\infty \alpha (\A_n, \B_n).
	$$
\end{lemma}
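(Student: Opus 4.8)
\textit{Plan.} I would establish the finite version
$$
\alpha\Big(\bigvee_{n=1}^N\A_n,\ \bigvee_{n=1}^N\B_n\Big)\le\sum_{n=1}^N\alpha(\A_n,\B_n)
$$
first and then pass to $N=\infty$ by a measure‑approximation argument. For the reduction, note that $\bigcup_{N\ge1}\bigvee_{n=1}^N\A_n$ is an algebra generating $\bigvee_{n=1}^\infty\A_n$ (an increasing union of $\sigma$-algebras), and likewise for $\B$. So given $G\in\bigvee_n\A_n$, $H\in\bigvee_n\B_n$ and $\eps>0$, I can pick $N$ and $G'\in\bigvee_{n=1}^N\A_n$, $H'\in\bigvee_{n=1}^N\B_n$ with $\P(G\triangle G')+\P(H\triangle H')<\eps$. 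Since $|\P(G\cap H)-\P(G'\cap H')|$ and $|\P(G)\P(H)-\P(G')\P(H')|$ are each at most $\P(G\triangle G')+\P(H\triangle H')<\eps$, the finite version applied to $G',H'$ yields $|\P(G\cap H)-\P(G)\P(H)|\le\sum_{n=1}^\infty\alpha(\A_n,\B_n)+2\eps$; letting $\eps\downarrow0$ and taking the supremum over $G,H$ finishes the infinite case.

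\textit{Telescoping for finite $N$.} Put $\mathcal{C}_n:=\A_n\vee\B_n$; by hypothesis the $\mathcal{C}_n$ are independent, so for the filtration $\mathcal{D}_j:=\mathcal{C}_1\vee\cdots\vee\mathcal{C}_j$ ($\mathcal{D}_0$ trivial) the tail $\mathcal{C}_{j+1}\vee\cdots\vee\mathcal{C}_N$ is independent of $\mathcal{D}_j$. Fix $E\in\bigvee_{n=1}^N\A_n$, $F\in\bigvee_{n=1}^N\B_n$ and set $f:=\ind_E$, $g:=\ind_F$, $f_j:=\E[f\mid\mathcal{D}_j]$, $g_j:=\E[g\mid\mathcal{D}_j]$, so $f_N=f$, $f_0=\P(E)$, $g_N=g$, $g_0=\P(F)$ and all $f_j,g_j$ take values in $[0,1]$. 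A routine $\pi$-system computation (expand $E$ along the generating $\pi$-system $\{A_1\cap\cdots\cap A_N:A_i\in\A_i\}$ and integrate out the coordinates $>j$, which are independent of $\mathcal{D}_j$) shows that $f_j$ is in fact $\sigma(\A_1,\dots,\A_j)$-measurable and $g_j$ is $\sigma(\B_1,\dots,\B_j)$-measurable. Telescoping and using $f_{j-1}=\E[f_j\mid\mathcal{D}_{j-1}]$, $g_{j-1}=\E[g_j\mid\mathcal{D}_{j-1}]$,
$$
\P(E\cap F)-\P(E)\P(F)=\sum_{j=1}^N\E\Big[\E[f_jg_j\mid\mathcal{D}_{j-1}]-\E[f_j\mid\mathcal{D}_{j-1}]\,\E[g_j\mid\mathcal{D}_{j-1}]\Big].
$$
Conditionally on $\mathcal{D}_{j-1}$ the remaining randomness of $f_j$ lies in $\A_j$ and that of $g_j$ in $\B_j$, while $\mathcal{C}_j\supseteq\A_j\vee\B_j$ is independent of $\mathcal{D}_{j-1}$; hence, after passing to a regular conditional distribution — equivalently, identifying $(\Omega,\mathcal{D}_N,\P)$ with the product $\bigotimes_{n=1}^N(\Omega,\mathcal{C}_n,\P|_{\mathcal{C}_n})$ — the $j$-th integrand equals, for a.e.\ value of $\mathcal{D}_{j-1}$, a covariance $\cov_{\P|_{\mathcal{C}_j}}(u,v)$ with $u$ an $\A_j$-measurable, $v$ a $\B_j$-measurable function with values in $[0,1]$.

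\textit{Key estimate and conclusion.} It then remains to see that $|\cov(u,v)|\le\alpha(\A_j,\B_j)$ for all such $u,v$ — crucially with constant $1$, not some universal factor like $4$. This follows from the layer-cake identities $u=\int_0^1\ind_{\{u>t\}}\,\dint t$, $v=\int_0^1\ind_{\{v>s\}}\,\dint s$: by Fubini,
$$
\cov(u,v)=\int_0^1\!\!\int_0^1\big(\P(\{u>t\}\cap\{v>s\})-\P(u>t)\,\P(v>s)\big)\,\dint t\,\dint s,
$$
and each integrand has modulus at most $\alpha(\A_j,\B_j)$ because $\{u>t\}\in\A_j$, $\{v>s\}\in\B_j$, while the domain $[0,1]^2$ has unit area. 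Thus the $j$-th telescoping term is $\le\alpha(\A_j,\B_j)$ in modulus, and summing over $j$ gives the finite bound. I expect the main obstacle to be the measure-theoretic bookkeeping in the telescoping step: verifying that integrating out the later blocks leaves a function of $\A_1,\dots,\A_j$ only (not of all of $\mathcal{D}_j$), and that the $j$-th conditional covariance is genuinely a $\P|_{\mathcal{C}_j}$-covariance of $[0,1]$-valued functions. Both become transparent via the product-space identification, which is legitimate precisely because the $\mathcal{C}_n$ are independent; everything else is routine.
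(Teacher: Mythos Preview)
The paper does not actually prove this lemma: its ``proof'' is a one-line citation to Bradley \cite{BRADLEY19811}. Your argument is a correct, self-contained proof, and it is essentially the same strategy as Bradley's original: pass to the product space afforded by independence of the blocks $\C_n=\A_n\vee\B_n$, telescope $\P(E\cap F)-\P(E)\P(F)$ along the filtration $\mathcal{D}_j=\C_1\vee\cdots\vee\C_j$, recognise each increment as a conditional covariance of a $[0,1]$-valued $\A_j$-measurable function against a $[0,1]$-valued $\B_j$-measurable one, and bound that covariance by $\alpha(\A_j,\B_j)$ via the layer-cake representation (which indeed gives constant $1$, not $4$). The reduction from $N=\infty$ to finite $N$ by approximating in the generating algebras is also standard and correctly carried out.

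One small caveat worth flagging: the paper's statement says the $\C_n$ are \emph{pairwise} independent, whereas your product-space identification (and Bradley's proof) uses \emph{mutual} independence of the $\C_n$. Taken literally, pairwise independence is strictly weaker for $N\ge 3$ and would not justify the product decomposition. This is almost certainly a wording slip in the paper rather than a gap in your argument --- Bradley's lemma is stated with mutual independence, and the only use the paper makes of the result (Remark~\ref{rem:mix}) is the two-block case, where pairwise and mutual independence coincide.
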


\begin{proof}
	The proof can be found in \cite[Lemma 8 on page 13]{BRADLEY19811}.
\end{proof}

\begin{remark}\label{rem:mix}
	We need the special case when $\A_1,\A_2,\B_1,\B_2\subset\F$ are $\sigma$-algebras, where $\A_2$ and $\B_2$ are independent too. For this, Lemma \ref{lem:salg} gives
	$\alpha (\A_1\vee\A_2,\B_1\vee\B_2)\le \alpha (\A_1,\B_1)$. By the definition of the measure of dependence between sigma algebras \eqref{eq:dep}, the reverse inequality trivially holds hence
	$$
	\alpha (\A_1\vee\A_2,\B_1\vee\B_2)= \alpha (\A_1,\B_1).
	$$
\end{remark}

% a stac. folyamat erős keverési együtthatójának becslése
The next lemma provides an upper bound for the strong mixing coefficient of the chain $(\theta_t)_{t\in\N}$ given $\alpha^Y$.

\begin{lemma}\label{lem:mixing}
	For the dependence coefficient $\alpha_j^\th (n)$, we have the following upper estimate
	$$
		\alpha_j^\th (n) \le \alpha^Y (\lfloor n/2\rfloor)
	+ 
	\left(V(\th_{0})+\frac{3}{2}+\frac{C}{2(1-\ga)}\right)e^{-\frac{\ka}{2}n},\,\,j\ge 0,\,n\ge 2N,
	$$
	where $\kappa$ and $N$ are as in Lemma \ref{lem:coupling}.
\end{lemma}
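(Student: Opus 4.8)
The plan is to bound $\alpha_j^\th(n)$ by splitting the forward $\sigma$-algebra $\F_{j+n,\infty}^\th$ into a part that depends only on the environment and a small error coming from the coupling estimate. Concretely, fix $j\ge 0$ and $n\ge 2N$, write $m:=j+\lfloor n/2\rfloor$, and observe that the iterates $\th_{m},\th_{m+1},\dots$ are measurable functions of $\th_m$, of $(Y_k)_{k\ge m-1}$, and of $(\eps_k)_{k\ge m}$ (or of $(\xi_k)_{k>m}$ in the original representation). The idea is to replace the genuine iterate $\th_m$, which carries information about the whole past, by a surrogate that is conditionally independent of $\F_{-\infty,j}^\th$ given the environment. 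For this I would use the auxiliary processes of \eqref{eq:Z}: the true tail is $(Z_{m,t}^{\th_m,\mathbf Y})_{t\ge m}$, while a good surrogate tail is $(Z_{m,t}^{\th_m',\mathbf Y})_{t\ge m}$ where $\th_m'$ is a copy of $\th_m$ built to be independent of $\F^\eps_{m+1,\infty}$ — in fact one may simply take $\th'_m$ to be run from the deterministic start $\th_0$ along a different, independent copy of the $\eps$'s up to time $m$, but the cleanest choice is to take $\th'_m$ with the law of $\th_m$ and independent of $(\eps_k)_{k> m}$ and of $\F^Y$. Then the two tails differ only on the event $\{Z_{m,t}^{\th_m,\mathbf Y}\ne Z_{m,t}^{\th'_m,\mathbf Y}\text{ for some }t\}$, whose probability is controlled by Lemma \ref{lem:annealed}.

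The key steps, in order, are as follows. First, decompose: for $G\in\F_{-\infty,j}^\th$ and $H\in\F_{j+n,\infty}^\th$ of the form $H=\{(\th_{j+n},\th_{j+n+1},\dots)\in A\}$, write $H':=\{(Z_{m,j+n}^{\th_m',\mathbf Y},Z_{m,j+n+1}^{\th'_m,\mathbf Y},\dots)\in A\}$ for the surrogate event. Then
$$
|\P(G\cap H)-\P(G)\P(H)|\le |\P(G\cap H')-\P(G)\P(H')| + 2\,\P(H\triangle H').
$$
Second, bound $\P(H\triangle H')$: since $\th'_m$ has the same law as $\th_m$, Lemma \ref{lem:annealed} with $\th_1=\th_m$, $\th_2=\th'_m$ and the tail time $t=m$ (noting $n-\lfloor n/2\rfloor\ge \lfloor n/2\rfloor\ge N$ for $n\ge 2N$) gives $\P(H\triangle H')\le\tfrac12(\E V(\th_m)+\E V(\th'_m)+3)e^{-\ka\lfloor n/2\rfloor}$, and by Corollary \ref{cor:drift}, $\E V(\th_m)\le V(\th_0)+C/(1-\ga)$, so $\P(H\triangle H')\le (V(\th_0)+3/2+\tfrac{C}{2(1-\ga)})e^{-\ka n/2}$ after absorbing $\lfloor n/2\rfloor\ge n/2 - 1$ into constants (or keeping the floor, which matches the statement). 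Third, bound the surrogate covariance $|\P(G\cap H')-\P(G)\P(H')|$ by $\alpha^Y(\lfloor n/2\rfloor)$: conditionally on $\F^Y$, the event $G$ depends only on $\th_0$ (deterministic) and $(\eps_k)_{k\le j}$ — hence on $\F^\eps_{-\infty,j}\vee\F^Y$ — while $H'$ depends only on $(\eps_k)_{k>m}$ and on $\th'_m$ and $\mathbf Y$, hence on $\F^\eps_{m+1,\infty}\vee\sigma(\th'_m)\vee\F^Y$; since $\F^\eps_{-\infty,j}$ and $\F^\eps_{m+1,\infty}\vee\sigma(\th'_m)$ are independent and both are independent of $\F^Y$, the only dependence between $G$ and $H'$ runs through the environment, i.e. through the pair $\F_{-\infty,j}^Y$ versus $\F_{m-1,\infty}^Y$ (the tail $\th_{m+1},\dots$ uses $Y_{m-1},Y_m,\dots$). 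A clean way to package this is Lemma \ref{lem:salg} / Remark \ref{rem:mix}: take $\A_1=\F^Y_{-\infty,j}$, $\A_2=\F^\eps_{-\infty,j}$, $\B_1=\F^Y_{m-1,\infty}$, $\B_2=\F^\eps_{m+1,\infty}\vee\sigma(\th'_m)$, so that $G\in\A_1\vee\A_2$, $H'\in\B_1\vee\B_2$, and since $\A_2,\B_2$ are independent, $\alpha(\A_1\vee\A_2,\B_1\vee\B_2)=\alpha(\A_1,\B_1)\le\alpha^Y(m-1-j)=\alpha^Y(\lfloor n/2\rfloor - 1)\le\alpha^Y(\lfloor n/2\rfloor)$ by monotonicity (minor index bookkeeping, e.g.\ choosing $m=j+\lfloor n/2\rfloor$ versus $m-1$, can be adjusted to land exactly on $\lfloor n/2\rfloor$). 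Finally, take the supremum over $G$ and $H$ and combine the two bounds.

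The main obstacle I expect is the third step: making precise the claim that, conditionally on the environment, the surrogate tail is a deterministic function of a block of $\eps$'s (or $\xi$'s) that is independent of the $\eps$'s generating $\F^\th_{-\infty,j}$, and that the residual dependence is exactly captured by a gap of size $\approx n/2$ in the $Y$-process. Two subtleties need care here. (i) One must verify that $H\in\F^\th_{j+n,\infty}$ really is of the postulated form $\{(\th_{j+n},\th_{j+n+1},\dots)\in A\}$ for Borel $A$ — this is just the definition of $\F^\th_{j+n,\infty}$ as the $\sigma$-algebra generated by those coordinates, so it suffices to prove the covariance bound for such generating events and extend by a standard monotone-class / approximation argument. (ii) The surrogate variable $\th'_m$ must be constructed on the same probability space without disturbing $G$; the cleanest device is to enlarge the space with an independent copy $(\eps_k')_{k\le m}$ of the innovations and set $\th'_m:=Z_{0,m}^{\th_0,\mathbf Y}$ computed with $\eps'$ in place of $\eps$ — then $\th'_m$ is $\F^Y\vee\sigma(\eps'_{\le m})$-measurable, has the same conditional law given $\F^Y$ as $\th_m$, and is independent of $\F^\eps$ and in particular of $G$; Lemma \ref{lem:annealed} still applies because its proof only uses the conditional law of the initial points given $\F^Y$. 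Everything else is routine: the exponential term is pure bookkeeping from Lemma \ref{lem:annealed} and Corollary \ref{cor:drift}, and the $\alpha^Y(\lfloor n/2\rfloor)$ term is immediate once the conditional-independence structure is in place.
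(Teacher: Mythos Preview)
Your overall strategy matches the paper's: split the gap $n$ into two halves, use the coupling estimate on one half and the $\alpha^Y$-mixing on the other, then invoke Remark \ref{rem:mix} to peel off the independent $\eps$-innovations. However, your construction of the surrogate $\th'_m$ contains a genuine error that breaks the $\alpha^Y$-bound.

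You propose $\th'_m := Z_{0,m}^{\th_0,\mathbf Y}$ built from an independent copy $\eps'$ of the innovations but the \emph{same} environment $\mathbf Y$. Then $\th'_m$ is $\F^Y_{0,m-1}\vee\sigma(\eps')$-measurable, and your $\B_2=\F^\eps_{m+1,\infty}\vee\sigma(\th'_m)$ therefore carries $\F^Y_{0,m-1}$-information. Remark \ref{rem:mix} is an application of Lemma \ref{lem:salg}, which requires $\A_1\vee\B_1$ to be independent of $\A_2\vee\B_2$; here $\A_1\vee\B_1\subset\F^Y$ while $\A_2\vee\B_2$ contains $\sigma(\th'_m)$, which is \emph{not} independent of $\F^Y$. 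So the reduction to $\alpha(\A_1,\B_1)\le\alpha^Y(\lfloor n/2\rfloor)$ fails. Relatedly, your claim that $\th'_m$ is ``in particular [independent] of $G$'' is false: $G\in\F^Y_{-\infty,j-1}\vee\F^\eps_{1,j}$ and $\th'_m$ both depend on $Y_0,\dots,Y_{j-1}$.

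The paper avoids all of this by restarting from the \emph{deterministic} $\th_0$ at time $j+r_n$ (with $r_n=\lfloor n/2\rfloor$): the surrogate tail $Z_{j+r_n,j+n\rightarrow}^{\th_0,\mathbf Y}$ is then measurable with respect to $\F^Y_{j+r_n,\infty}\vee\F^\eps_{j+r_n+1,\infty}$ only, so the $Y$-gap of size $r_n+1$ is preserved and Remark \ref{rem:mix} applies directly. This also yields the exact constant in the statement, since the annealed coupling bound involves $\tfrac12(\E V(\th_{j+r_n})+V(\th_0)+3)\le V(\th_0)+\tfrac32+\tfrac{C}{2(1-\ga)}$; your version with $\th'_m\stackrel{d}{=}\th_m$ would instead give $V(\th_0)+\tfrac32+\tfrac{C}{1-\ga}$, and the extra factor $2$ in front of $\P(H\triangle H')$ (which is unnecessary: $|\cov(\ind_G,\ind_H-\ind_{H'})|\le\E|\ind_H-\ind_{H'}|$) would double it again. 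Your argument is easily repaired by either taking $\th'_m=\th_0$ as in the paper, or by taking $\th'_m$ with law $\law{\th_m}$ and genuinely independent of $\F^Y\vee\F^\eps$ (the option you mention first but then abandon).
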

\begin{proof}
	We introduce the notations $\th_{\rightarrow t}:=(\th_0,\th_1,\ldots,\th_t)$ and $\th_{t\rightarrow}:=(\th_t,\th_{t+1},\ldots)$, and also $Z_{s,\rightarrow t}^{\th_0,\mathbf{y}}:=(Z_{s,s}^{\th_0,\mathbf{y}},\ldots,Z_{s,t}^{\th_0,\mathbf{y}})$,
	$Z_{s,t\rightarrow}^{\th_0,\mathbf{y}}:=(Z_{s,t}^{\th_0,\mathbf{y}},Z_{s,t+1}^{\th_0,\mathbf{y}},\ldots)$. Let $A\in\F_{0,j}^\th$ and $B\in\F_{j+n,\infty}^\th$ be arbitrary events. Then by the definition of the generated $\sigma$-algebra, exist $A^\X\in\B(\X^{j+1})$ and $B^\X\in\B(\X^\N)$ such that
	$$
	A=\left\{\omega\in\Omega\middle| \th_{\rightarrow j}(\omega)\in A^\X  \right\}\,\,\text{and}\,\,B=\left\{\omega\in\Omega\middle|\th_{j+n\rightarrow}(\omega)\in B^\X  \right\}.
	$$
	So, for any $r_n$ satisfying $0\le r_n\le n-N$ we can write
	\begin{align}\label{eq:alpha}
	\begin{split}
		|\P (A\cap B)-\P(A)\P(B)| &= |\cov (\ind_{\th_{\rightarrow j}\in A^\X},\ind_{\th_{j+n\rightarrow}\in B^\X})|
		=
		\left|
		\cov (\ind_{Z_{0,\rightarrow j}^{\th_0,\mathbf{Y}}\in A^\X},\ind_{Z_{0, j+n\rightarrow}^{\th_0,\mathbf{Y}}\in B^\X})
		\right| 
		\\
		&\le 
		\left|
		\cov (\ind_{Z_{0,\rightarrow j}^{\th_0,\mathbf{Y}}\in A^\X},\ind_{Z_{j+r_n, j+n\rightarrow}^{\th_0,\mathbf{Y}}\in B^\X})
		\right|
		+
		\P \left(Z_{0, j+n}^{\th_0,\mathbf{Y}}\ne Z_{j+r_n, j+n}^{\th_0,\mathbf{Y}}\right).
	\end{split}
	\end{align}
	
	Observe that $\ind_{Z_{0,\rightarrow j}^{\th_0,\mathbf{Y}}\in A^\X}$ is $\F_{-\infty,j-1}^Y\vee\F_{1,j}^\eps$-measurable and $\ind_{Z_{j+r_n, j+n\rightarrow}^{\th_0,\mathbf{Y}}\in B^\X}$ is $\F_{j+r_n,j+n-1}^Y\vee\F_{j+r_n+1,j+n}^\eps$-measurable, moreover
	$\F_{-\infty,j-1}^Y\vee\F_{j+r_n,j+n-1}^Y$ is independent of $\F_{1,j}^\eps\vee\F_{j+r_n+1,j+n}^\eps$, and also the $\sigma$-algebras $\F_{1,j}^\eps$ and $\F_{j+r_n+1,j+n}^\eps$ are independent of each other hence by Remark \ref{rem:mix} and the stationarity of $(Y_k)_{k\in\Z}$, we have
	\begin{align}\label{eq:aY}
	\begin{split}
	\left|
	\cov (\ind_{Z_{0,\rightarrow j}^{\th_0,\mathbf{Y}}\in A^\X},\ind_{Z_{j+r_n, j+n\rightarrow}^{\th_0,\mathbf{Y}}\in B^\X})
	\right|
	&\le 
	\alpha
	\left(
	\F_{-\infty,j-1}^Y\vee\F_{1,j}^\eps,
	\F_{j+r_n,j+n-1}^Y\vee\F_{j+r_n+1,j+n}^\eps
	\right) \\
	&\le 
	\alpha
	\left(
	\F_{-\infty,j-1}^Y,
	\F_{j+r_n,j+n-1}^Y
	\right) \le \alpha_{j-1}^Y (r_n+1) \\
	&=\alpha^Y (r_n+1).
	\end{split}
	\end{align}
	By Lemma \ref{lem:annealed} and Corollary \ref{cor:drift}, we can estimate the second term on the right-hand side of \eqref{eq:alpha}
	\begin{align*}
		\P \left(Z_{0, j+n}^{\th_0,\mathbf{Y}}\ne Z_{j+r_n, j+n}^{\th_0,\mathbf{Y}}\right)
		&=
		\P\left(Z_{j+r_n, j+n}^{\th_{j+r_n},\mathbf{Y}}\ne Z_{j+r_n, j+n}^{\th_0,\mathbf{Y}}\right)
		\le 
		\frac{\E (V(\th_{j+r_n}))+V(\th_{0})+3}{2}e^{-\ka (n-r_n)} \\
		&\le 
		\left(V(\th_{0})+\frac{3}{2}+\frac{C}{2(1-\ga)}\right)e^{-\ka (n-r_n)}.
	\end{align*}
	Combining this with \eqref{eq:aY}, and taking the supremum on the left-hand side of \eqref{eq:alpha} yields
	\begin{equation*}
		\alpha_j^\th (n)\le \alpha^Y (r_n+1) + \left(V(\th_{0})+\frac{3}{2}+\frac{C}{2(1-\ga)}\right)e^{-\ka (n-r_n)}.
	\end{equation*}
	for any $0\le r_n\le n-N$. By choosing $r_n = \lfloor n/2\rfloor$, we obtain the desired inequality
	$$
	\alpha_j^\th (n) \le \alpha^Y (\lfloor n/2\rfloor)
	+ 
	\left(V(\th_{0})+\frac{3}{2}+\frac{C}{2(1-\ga)}\right)e^{-\frac{\ka}{2}n},\,\,j\ge 0,\,n\ge 2N.
	$$
\end{proof}

\subsection{Proof of Theorem \ref{thm:LLN}}

\begin{lemma}\label{lem:UI}
The sequence $(\phi (\th_t))_{t\in\N}$ is uniformly $L^p$-bounded for every $1\le p<\infty$ that is
	\begin{equation}\label{eq:HD1}
	c_p:=\sup_{t\in\N}\E^{1/p}(|\phi (\th_t)|^p)<\infty.	
	\end{equation}
\end{lemma}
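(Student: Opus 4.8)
The statement to prove is that $\sup_{t\in\N}\E^{1/p}(|\phi(\th_t)|^p)<\infty$ for every $p\in[1,\infty)$. The plan is to reduce everything to a uniform moment bound on $\|\th_t\|$ and then exploit the polynomial growth of $\phi$. Because of the polynomial growth condition \eqref{eq:polygrowth}, $|\phi(\th_t)|^p\le c_\phi^p(1+\|\th_t\|^r)^p\le 2^{p-1}c_\phi^p(1+\|\th_t\|^{rp})$, so it suffices to show $\sup_{t\in\N}\E(\|\th_t\|^q)<\infty$ for every fixed $q\ge 1$ (here $q=rp$).

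The key observation is that the Lyapunov function $V(\th)=\exp(a\|\th\|^2)$ from Lemma \ref{lem-qexp-drift} dominates every polynomial: for each $q\ge 1$ there is a constant $c(q,a)$ with $\|\th\|^q\le c(q,a)V(\th)$ for all $\th\in\X$, since $t^{q/2}e^{-at}$ is bounded on $[0,\infty)$. Therefore $\E(\|\th_t\|^q)\le c(q,a)\E(V(\th_t))$, and by Corollary \ref{cor:drift} (applied with the deterministic initial value $\th_0$, so that $\E(V(\th_0))=V(\th_0)<\infty$) we have $\sup_{t\in\N}\E(V(\th_t))\le V(\th_0)+\frac{C}{1-\ga}<\infty$. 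Chaining these inequalities gives $\sup_{t\in\N}\E(\|\th_t\|^q)\le c(q,a)\bigl(V(\th_0)+\tfrac{C}{1-\ga}\bigr)<\infty$.

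Putting the two steps together: for any $p\ge 1$,
$$
\E(|\phi(\th_t)|^p)\le 2^{p-1}c_\phi^p\bigl(1+\E(\|\th_t\|^{rp})\bigr)\le 2^{p-1}c_\phi^p\Bigl(1+c(rp,a)\bigl(V(\th_0)+\tfrac{C}{1-\ga}\bigr)\Bigr),
$$
which is a finite bound independent of $t$; taking the $p$-th root yields \eqref{eq:HD1}. I do not anticipate a genuine obstacle here: the only mild point requiring care is the elementary domination $\|\th\|^q\le c(q,a)V(\th)$ and the verification that Corollary \ref{cor:drift} applies with a deterministic (hence integrable-under-$V$) initial condition, both of which are routine. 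One could alternatively invoke Remark \ref{rem:V0bd} or Corollary \ref{cor:moments}, but the argument via Corollary \ref{cor:drift} is the most direct and self-contained.
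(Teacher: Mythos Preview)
Your proof is correct and follows essentially the same approach as the paper: both use the polynomial growth bound \eqref{eq:polygrowth}, then dominate the polynomial $\lVert\th\rVert^{rp}$ by the exponential Lyapunov function $V(\th)=e^{a\lVert\th\rVert^{2}}$, and finally invoke Corollary~\ref{cor:drift} for the uniform bound $\sup_{t}\E(V(\th_t))\le V(\th_0)+C/(1-\ga)$. The only cosmetic difference is that the paper makes the domination constant explicit via the elementary inequality $x^{s}\le\Gamma(s+1)e^{x}$ (so that $\lVert\th\rVert^{rp}\le a^{-rp/2}\Gamma(rp/2+1)V(\th)$), whereas you simply note that $t^{q/2}e^{-at}$ is bounded on $[0,\infty)$.
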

\begin{proof}
 	Using $x^s\le \Gamma (s+1) e^x$, $x,s\ge 0$ and \eqref{eq:polygrowth},  by Corollary \ref{cor:drift}, we can write
 	\begin{align*}
 		\E^{1/p}(|\phi (\th_t)|^p)
 		&\le 
 		c_{\phi}\left(1+\Gamma \left(\frac{rp}{2}+1\right)^{1/p}\E^{1/p} (V(\th_t))\right)
 		\le
 		c_{\phi}\left(1+\Gamma \left(\frac{rp}{2}+1\right)^{1/p}\left(V(\th_0)+\frac{C}{1-\ga}\right)^{1/p}\right),
 	\end{align*}
 	where the upper bound does not depend on $t$ hence \eqref{eq:HD1} clearly holds.
\end{proof}

\begin{proof}[Proof of Theorem \ref{thm:LLN}]
	Let $(\th_0^\ast, (Y_k)_{k\in\Z})$ be a random variable such that $\law{\left((\th_0^\ast, (Y_k)_{k\in\Z})\right)}=\mu^\ast$. For $k\in\N$, $0\le i_1<\ldots<i_k$ and $A\in\B (\X^k)$ arbitrary, we can write
	\begin{align*}
		\P \left(
		\left(\th_{i_1+n},\ldots,\th_{i_k+n}\right)\in A
		\right) 
		&=
		\P \left(
		\left(
		Z_{0,i_1+n}^{\th_0,\mathbf{Y}},\ldots,Z_{0,i_k+n}^{\th_0,\mathbf{Y}}\right)
		\in A
		\right)
		\\
		&\le 
		\P \left(
		\left(
		Z_{0,i_1+n}^{\th_0^\ast,\mathbf{Y}},\ldots,Z_{0,i_k+n}^{\th_0^\ast,\mathbf{Y}}
		\right)
		\in A
		\right) + 
		\P \left(Z_{0,i_1+n}^{\th_0,\mathbf{Y}}\ne Z_{0,i_1+n}^{\th_0^\ast,\mathbf{Y}} \right)
		\\
		&\le 
		\P \left(
		\left(\th_{i_1+n}^\ast,\ldots,\th_{i_k+n}^\ast\right)\in A
		\right) + \P \left(Z_{0,i_1+n}^{\th_0,\mathbf{Y}}\ne Z_{0,i_1+n}^{\th_0^\ast,\mathbf{Y}} \right).
	\end{align*}	
	By interchanging the role of $(\th_t)_{t\in\N}$ and  $(\th_t^\ast)_{t\in\N}$, we obtain
	$$
	\left| \P \left(
	\left(\th_{i_1+n},\ldots,\th_{i_k+n}\right)\in A
	\right)
	-
	\P \left(
	\left(\th_{i_1+n}^\ast,\ldots,\th_{i_k+n}^\ast\right)\in A
	\right)
	\right| 
	\le 
	\P \left(Z_{0,i_1+n}^{\th_0,\mathbf{Y}}\ne Z_{0,i_1+n}^{\th_0^\ast,\mathbf{Y}} \right),\,\,A\in\B(\X^k).
	$$
	Next, we take supremum on the left hand-side in $A\in\B(\X^k)$ and then by Lemma \ref{lem:annealed} and Remark \ref{rem:V0bd}, we arrive at
	\begin{align*}
	\dtv\left(\law{(\th_{i_1+n},\ldots,\th_{i_k+n})},\law{(\th_{i_1}^\ast,\ldots,\th_{i_k}^\ast)}\right)
	&\le 
	\P \left(Z_{0,i_1+n}^{\th_0,\mathbf{Y}}\ne Z_{0,i_1+n}^{\th_0^\ast,\mathbf{Y}} \right)
	\\
	&\le 
	\frac{V(\th_0)+\E (V(\th_0^\ast))+3}{2}e^{-\ka (i_1+n)}
	\\
	&\le 
	\left(V(\th_0)+\frac{3}{2}+\frac{C}{2(1-\ga)}\right) e^{-\ka n},\,\,n\ge N,
	\end{align*}
	where $N\in\N$ is as in Lemma \ref{lem:coupling} and \ref{lem:annealed}.
	
	\medskip
	In what follows, we proceed with the proof of the law of large numbers both in strong and $L^p$ sense. Again by Lemma \ref{lem:annealed} and Remark \ref{rem:V0bd}, exist an almost surely finite random variable $\tau$ such that
	$$
	Z_{0,n}^{\th_0,\mathbf{Y}}= Z_{0,n}^{\th_0^\ast,\mathbf{Y}},\,n\ge\tau.
	$$
	Furthermore, for the tail distribution of $\tau$,
	$$
	\P(\tau\ge n)\le\P \left(Z_{0,n}^{\th_0,\mathbf{Y}}\ne Z_{0,n}^{\th_0^\ast,\mathbf{Y}}\right) \le \left(V(\th_0)+\frac{3}{2}+\frac{C}{2(1-\ga)}\right) e^{-\ka n},\,\,n\ge N.
	$$
	holds with constants $\ka>0$ and $N\in\N$ as in Lemma \ref{lem:coupling} and \ref{lem:annealed}. 
	
	When the data stream $(Y_k)_{k\in\Z}$ is ergodic, by Remark \ref{rem:truquet}, the process $Z_{0,n}^{\th_0^\ast,\mathbf{Y}}$, $n\in\N$ is also ergodic, moreover by Remark \ref{rem:V0bd}, $\E (\phi (\th_0^\ast))<\infty$ for any $\phi:\X\to\R$ satisfying \eqref{eq:polygrowth} hence by Birkhoff's ergodic theorem,
	$$
	\frac{\phi (Z_{0,0}^{\th_0^\ast,\mathbf{Y}})+\ldots+\phi (Z_{0,n-1}^{\th_0^\ast,\mathbf{Y}})}{n} \to \E (\phi (\th_0^\ast)),\,n\to\infty,\,\P-\Pas
	$$
	Combining this with the above result on the almost surely finite coupling time yields the strong law of large numbers for $\phi\left(Z_{0,n}^{\th_0,\mathbf{Y}}\right)$, $n\in\N$. As we mentioned earlier, the discrete-time processes $(\th_n)_{n\in\N}$ and $\left(Z_{0,n}^{\th_0,\mathbf{Y}}\right)_{n\in\N}$ are versions of each other hence the strong law of large numbers holds for $(\phi (\th_n))_{n\in\N}$, as well.
	
	Finally, by Lemma \ref{lem:UI}, the sequence $\frac{1}{n}(\phi (\th_0)+\ldots+\phi (\th_{n-1}))$, $n\ge 1$ is uniformly integrable on every power $1\le p<\infty$, and thus the law of large numbers holds in $L^p$-sense too for $1\le p<\infty$ which completes the proof.
\end{proof}

\subsection{Proof of Theorem \ref{thm:CLT}}

The subsequent lemma establishes a stability result for the autocovariance function of the sequence $(\phi(\th_k))_{k\in\N}$. Additionally, it provides an explicit upper bound for $\sup_{k\in\N}|\cov (\phi (\th_k),\phi (\th_{k+l}))|$ in terms of
the $\alpha$-mixing coefficient of $Y$. 
For the sake of readability, the proof is delegated to Appendix \ref{ap}.

\begin{lemma}\label{lem:autocov}
	The autocovariance function of $(\phi(\th_k))_{k\in\N}$ has the following properties.
	\begin{enumerate}[i)]
		\item For every $l\in\N$, $\cov (\phi (\th_k),\phi (\th_{k+l}))\to \cov (\phi (\th_0^\ast),\phi (\th_l^\ast))$, as $k\to\infty$.
		
		\item There exists a constant $\Lambda>0$ depending only on $\Delta$, $b$, $K$, $M$ and $\la$ such that for any $k,l\in\N$,
		$$
		|\cov (\phi (\th_k),\phi (\th_{k+l}))|\le \Lambda \left(
		\alpha^Y (\lfloor l/2\rfloor)^{1-\epsilon}+e^{-\frac{\ka}{4} \lfloor l/2\rfloor}
		\right),
		$$
		where $\epsilon>0$ is as in Assumption \ref{as:mixing}.
	\end{enumerate}
\end{lemma}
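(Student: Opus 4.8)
The plan is to prove the two assertions separately, writing throughout $\cov(\phi(\th_k),\phi(\th_{k+l})) = \E[\phi(\th_k)\phi(\th_{k+l})] - \E[\phi(\th_k)]\,\E[\phi(\th_{k+l})]$. For part (i), the product of expectations converges to $\E[\phi(\th_0^\ast)]\,\E[\phi(\th_l^\ast)]$ immediately from Corollary \ref{cor:moments} and the stationarity of $(\th_t^\ast)_{t\in\N}$, so the real point is the joint moment. Theorem \ref{thm:LLN}, applied to the index pair $(0,l)$ (or to the single index $0$ when $l=0$), yields $\dtv(\law{(\th_k,\th_{k+l})},\law{(\th_0^\ast,\th_l^\ast)}) \le c\,e^{-\ka k}\to 0$. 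Since $\phi$ is merely polynomially growing, I would truncate: let $g_\Sigma$ be the function $(x,y)\mapsto\phi(x)\phi(y)$ clipped to $[-\Sigma,\Sigma]$. Using Cauchy--Schwarz, Markov's inequality and the uniform $L^4$-bound of Lemma \ref{lem:UI}, the tail $\E[\,|\phi(\th_k)\phi(\th_{k+l})|\,\ind_{\{|\phi(\th_k)\phi(\th_{k+l})|>\Sigma\}}\,]$ is at most $c_4^4/\Sigma$ uniformly in $k$, and the analogous bound holds with $\th^\ast$ in place of $\th$ thanks to $\E V(\th_0^\ast)<\infty$ from Remark \ref{rem:V0bd}. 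As $g_\Sigma$ is bounded, $\E[g_\Sigma(\th_k,\th_{k+l})]\to\E[g_\Sigma(\th_0^\ast,\th_l^\ast)]$ by the total variation convergence, so a standard $3\eta$ argument (first send $k\to\infty$, then $\Sigma\to\infty$) finishes part (i).

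For part (ii), note that $\phi(\th_k)$ is $\F_{0,k}^\th$-measurable and $\phi(\th_{k+l})$ is $\F_{k+l,\infty}^\th$-measurable, hence $\alpha\big(\sigma(\phi(\th_k)),\sigma(\phi(\th_{k+l}))\big)\le\alpha\big(\F_{-\infty,k}^\th,\F_{k+l,\infty}^\th\big)=\alpha_k^\th(l)$, which Lemma \ref{lem:mixing} bounds by $\alpha^Y(\lfloor l/2\rfloor)+D\,e^{-\frac{\ka}{2}l}$ for $l\ge 2N$, with $D:=V(\th_0)+\tfrac32+\tfrac{C}{2(1-\ga)}$. Davydov's classical covariance inequality for $\alpha$-mixing random variables then gives, for any $p>2$, $|\cov(\phi(\th_k),\phi(\th_{k+l}))|\le C_p\,\alpha_k^\th(l)^{1-2/p}\,\|\phi(\th_k)\|_p\|\phi(\th_{k+l})\|_p\le C_p\,c_p^2\,\alpha_k^\th(l)^{1-2/p}$ by Lemma \ref{lem:UI}. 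One may assume $\epsilon\le 1/2$ in Assumption \ref{as:mixing} without loss of generality (since $\alpha^Y(n)\le 1/4$, shrinking $\epsilon$ only shrinks the series), so I would choose $p=2/\epsilon$, whence $1-2/p=1-\epsilon$; subadditivity of $x\mapsto x^{1-\epsilon}$ on $[0,\infty)$, the bound $\alpha^Y(\lfloor l/2\rfloor)\le 1$, and the elementary inequality $e^{-\frac{\ka}{2}(1-\epsilon)l}\le e^{-\frac{\ka}{4}\lfloor l/2\rfloor}$ (valid because $\epsilon\le1/2$) then yield the claimed bound for $l\ge 2N$. For $l<2N$ the left-hand side is at most $c_2^2$ by Cauchy--Schwarz and Lemma \ref{lem:UI}, while the right-hand side is at least $e^{-\frac{\ka}{4}(N-1)}$, so the inequality persists after enlarging $\Lambda$; the case $l=0$ is the one-dimensional marginal and is handled the same way.

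The genuinely substantive half is part (i): total variation convergence by itself only controls bounded test functions, so the uniform moment estimate of Lemma \ref{lem:UI} (ultimately the drift bound of Corollary \ref{cor:drift}) must be invoked to carry the convergence across the unbounded, only-polynomially-growing $\phi$ via truncation — routine, but this is where the quantitative content of the earlier lemmas is actually used. In part (ii) the only care needed is the exponent bookkeeping: matching the mixing exponent $1-2/p$ with $1-\epsilon$, reducing to $\epsilon\le1/2$, and checking that the exponential remainder $e^{-\frac{\ka}{2}(1-\epsilon)l}$ is dominated by $e^{-\frac{\ka}{4}\lfloor l/2\rfloor}$; I expect these to be the fiddliest, though essentially mechanical, points.
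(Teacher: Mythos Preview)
Your proof is correct, but your route differs from the paper's in both parts, and in a useful way.

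For part (i), the paper works directly with the coupling representation $Z_{0,k}^{\th_0,\mathbf{Y}}$ versus $Z_{0,k}^{\th_0^\ast,\mathbf{Y}}$ on the same probability space: it writes $\phi(\th_k)\phi(\th_{k+l})-\phi(\th_k^\ast)\phi(\th_{k+l}^\ast)$ as a telescoping difference, bounds each piece by Cauchy--Schwarz, and then controls $\E^{1/2}[(\phi(\th_k)-\phi(\th_k^\ast))^2]$ via the coupling event $\{Z_{0,k}^{\th_0,\mathbf{Y}}\neq Z_{0,k}^{\th_0^\ast,\mathbf{Y}}\}$ and H\"older (picking up the $L^4$ bound and $\P(\cdot)^{1/4}$). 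You instead invoke the packaged total-variation estimate of Theorem~\ref{thm:LLN} and pass to unbounded $\phi$ by truncation plus the uniform $L^4$ bound. The content is the same---the TV bound in Theorem~\ref{thm:LLN} \emph{is} the coupling probability---but your argument is more modular, while the paper's gives an explicit exponential rate $e^{-\frac{\ka}{4}k}$ for the convergence in (i) as a by-product.

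For part (ii), the difference is sharper. The paper does \emph{not} go through $\alpha^\th$: it splits $\cov(\phi(\th_k),\phi(\th_{k+l}))$ directly by restarting the chain at time $k+l-\lfloor l/2\rfloor$, applies its home-grown Ibragimov-type covariance inequality (Lemma~\ref{lem:ibragimov}) to the restarted piece against the $Y,\eps$ $\sigma$-algebras to extract $\alpha^Y(\lfloor l/2\rfloor)^{1-\epsilon}$, and controls the remainder by the coupling probability from Lemma~\ref{lem:annealed}. In effect the paper re-runs the decomposition behind Lemma~\ref{lem:mixing} at the covariance level. Your route---quote Lemma~\ref{lem:mixing} for $\alpha_k^\th(l)$, then apply Davydov with $p=2/\epsilon$---is shorter and reuses existing infrastructure; the exponent bookkeeping (WLOG $\epsilon\le 1/2$, subadditivity of $x\mapsto x^{1-\epsilon}$, and $e^{-\frac{\ka}{2}(1-\epsilon)l}\le e^{-\frac{\ka}{4}\lfloor l/2\rfloor}$) is handled correctly. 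The paper's approach has the advantage of being self-contained (it proves its own covariance inequality rather than citing Davydov) and of not needing the reduction to $\epsilon\le 1/2$.
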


\begin{proof}[Proof of Theorem \ref{thm:CLT}]
We are going to verify the conditions of Corollary 1 in Herrndorf's paper \cite{herrndorf1984}. By Lemma \ref{lem:UI}, for $X_n:=\phi (\th_n)-\E (\phi (\th_n))$, $n\in\N$, and for any $1\le p<\infty$,
$$
\sup_{n\in\N}\E^{1/p}(|X_n|^p)<\infty.
$$

Next, we prove for $S_n:=X_1+\ldots+X_n$, $\lim_{n\to\infty} \E S_n^2/n = \sigma^2$
holds with some $\sigma\ge 0$. For this, we consider the decomposition 
\begin{equation}\label{eq:dcp}
\frac{1}{n}\E S_n^2= \frac{1}{n}\sum_{k=1}^{n}\E (X_k^2) + \frac{2}{n}\sum_{1\le k<l\le n} \E (X_k X_l),
\end{equation}
where by point i) in Lemma \ref{lem:autocov}, $\E (X_k^2)\to\D^2 (\phi (\th_0^\ast))$ as $k\to\infty$ hence the first term on the right-hand side of \eqref{eq:dcp} converges to $\D^2 (\phi (\th_0^\ast))$. Regarding the second term, we introduce $A_{n,l}:=\frac{1}{n}\sum_{k=1}^{n-l} \E (X_k X_{k+l})$, $1\le l<n$, and define 
$$
b_n:=\frac{1}{n}\sum_{1\le k<l\le n} \E (X_k X_l)=
\sum_{l=1}^{n-1}\frac{1}{n}\sum_{k=1}^{n-l}\E (X_k X_{k+l})=
\sum_{l=1}^{n-1} A_{n,l}. 
$$
By point ii) in Lemma \ref{lem:autocov}, we have
\begin{equation}\label{eq:dcp2}
|A_{n,l}|\le \frac{1}{n}\sum_{k=1}^{n-l} |\E (X_k X_{k+l})|\le \Lambda \left(
\alpha^Y (\lfloor l/2\rfloor)^{1-\epsilon}+e^{-\frac{\ka}{4} \lfloor l/2\rfloor}
\right)
\end{equation}
hence due to Assumption \ref{as:mixing}, for any $\delta>0$, exists $\tilde{N}_\delta\in\N$ such that $\sum_{l=\tilde{N}_{\delta}}^{n-1} |A_{n,l}|<\delta$, $n>\tilde{N}_{\delta}$, and thus for $m,n>\tilde{N}_{\delta}$, we have
$$
|b_n-b_m|\le \sum_{l=1}^{\tilde{N}_{\delta}-1}|A_{n,l}-A_{m,l}| 
+
\sum_{l=\tilde{N}_{\delta}}^{n-1} |A_{n,l}|
+
\sum_{l=\tilde{N}_{\delta}}^{m-1} |A_{m,l}| 
<
\sum_{l=1}^{\tilde{N}_{\delta}-1}|A_{n,l}-A_{m,l}| 
+
2\delta.
$$
By point i) in Lemma \ref{lem:autocov}, for every $1\le l<\tilde{N}_{\delta}$, $A_{n,l}\to \cov (\phi (\th_0^\ast),\phi (\th_l^\ast))$, as $n\to\infty$, and since $\delta>0$ was arbitrary, we obtain that $(b_n)_{n\ge 1}$ is a Cauchy sequence.

At last, by Lemma \ref{lem:mixing}, for the mixing coefficient $\alpha_j^X(n)$, we have
$$
\alpha_j^X(n) = \alpha_j^{\phi\circ\theta}(n)\le \alpha_j^\theta (n)
\le 
\alpha^Y (\lfloor n/2\rfloor)
+ 
\left(V(\th_{0})+\frac{3}{2}+\frac{C}{2(1-\ga)}\right)e^{-\frac{\ka}{2}n},\,\,j\ge 0,\,n\ge 2N,
$$
and thus for $\alpha^X(n):=\sup_{j\in\N}\alpha_j^X(n)$, $n\in\N$, $\sum_{n=0}^\infty \alpha^X(n)^{1-\epsilon}<\infty$ holds.

To sum up, we have shown that all the conditions of Corollary 1 in \cite{herrndorf1984} are satisfied hence we can conclude that,
if $\sigma>0$ then
the sequence of random function $(B_n)_{n\ge 1}$ given by
$$
B_n(t) = \frac{S_{\lfloor nt\rfloor}}{\sigma\sqrt{n}},\,\,t\in [0,1],\,n\ge 1
$$
is weakly convergent to a standard Brownian motion $B$ on $D[0,1]$ endowed with the Skorohod topology which completes the proof.
If $\sigma=0$ then $\E(S_{n}^{2})/n\to 0$ implies ${S_{\lfloor nt\rfloor}}/{\sqrt{n}}\to 0$ in probability, for all $t\in [0,1]$,
hence also in $D[0,1]$.

\end{proof}

\appendix

\section{Proof of Lemma \ref{lem:autocov}}\label{ap}

The following auxiliary result is a variation of Theorem 17.2.2 from the renowned book by Ibragimov and Linnik \cite{IbragimovLinnik}. In the interest of self-contained explanation and for future reference, we have chosen to present this result here in the required form.
\begin{lemma}\label{lem:ibragimov}
	Let $\Psi_1,\Psi_2:\Omega\to\R$ be random variables such that for $\sigma$-algebras $\A_1$ and $\A_2$, $\Psi_i$ is $\A_i$-measurable, $i=1,2$. Furthermore, for some $0<\epsilon<1$,
	$\E (|\Psi_i|^{2/\epsilon+1})<c,\,\,i=1,2$ holds with some $c>0$. Then
	$$
	|\cov (\Psi_1,\Psi_2)|\le (4+5c)\alpha (\A_1, \A_2)^{1-\epsilon}.
	$$
\end{lemma}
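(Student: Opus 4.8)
The plan is to reduce the $L^p$-type moment bound to the elementary covariance inequality for \emph{bounded} random variables by means of a truncation whose cutoff level is then optimized against $a:=\alpha(\A_1,\A_2)$. First I would dispose of the degenerate case $a=0$: then $\A_1$ and $\A_2$ are independent, and Jensen's inequality (with $p:=2/\epsilon+1>3$, so in particular $p>2$) gives $\E|\Psi_i|<\infty$ and $\E|\Psi_1\Psi_2|\le(\E\Psi_1^2)^{1/2}(\E\Psi_2^2)^{1/2}<\infty$, hence $\cov(\Psi_1,\Psi_2)=0$ and there is nothing to prove; so assume $0<a\le 1/4$ (the upper bound always holds for $\alpha$-mixing coefficients). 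Next I would record the bounded estimate: if $X$ is $\A_1$-measurable with $|X|\le 1$ and $Y$ is $\A_2$-measurable with $|Y|\le 1$, then $|\cov(X,Y)|\le 4a$. This follows by writing $X=X^+-X^-$, $Y=Y^+-Y^-$, so that it suffices to treat $X,Y$ valued in $[0,1]$, and then using the layer-cake identities $X=\int_0^1\ind_{X>s}\,\dint s$, $Y=\int_0^1\ind_{Y>t}\,\dint t$ together with Tonelli's theorem to obtain $\cov(X,Y)=\int_0^1\int_0^1\cov(\ind_{X>s},\ind_{Y>t})\,\dint s\,\dint t$, where each integrand has modulus $\le a$ because $\{X>s\}\in\A_1$ and $\{Y>t\}\in\A_2$; by homogeneity $|\cov(X,Y)|\le 4a\lVert X\rVert_\infty\lVert Y\rVert_\infty$ for all bounded $X,Y$.

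The main step is the truncation. Fix $L>0$, set $\Psi_i^L:=(-L)\vee(\Psi_i\wedge L)$ and $\bar\Psi_i:=\Psi_i-\Psi_i^L$; these are $\A_i$-measurable, $|\Psi_i^L|\le\min(L,|\Psi_i|)$, and $|\bar\Psi_i|\le|\Psi_i|\ind_{|\Psi_i|>L}$. Expanding $\cov$ bilinearly,
$$
\cov(\Psi_1,\Psi_2)=\cov(\Psi_1^L,\Psi_2^L)+\cov(\Psi_1^L,\bar\Psi_2)+\cov(\bar\Psi_1,\Psi_2),
$$
the first term being $\le 4L^2a$ by the bounded estimate. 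For the remaining two I would use $|\cov(U,W)|\le\E|UW|+\E|U|\,\E|W|$, the elementary bound $|\Psi_i|\ind_{|\Psi_i|>L}\le|\Psi_i|^{p-1}L^{-(p-2)}$ (valid since $p-2>0$), Hölder's inequality with conjugate exponents $p$ and $\tfrac{p}{p-1}$, and the hypothesis $\E|\Psi_i|^p<c$; this yields $\E\big[|\Psi_1|\,|\Psi_2|\,\ind_{|\Psi_1|>L}\big]\le c\,L^{-(p-2)}$ (and the analogue with $1,2$ interchanged), $\E|\bar\Psi_i|\le c^{(p-1)/p}L^{-(p-2)}$ and $\E|\Psi_i|\le c^{1/p}$, whence each of $\cov(\Psi_1^L,\bar\Psi_2)$ and $\cov(\bar\Psi_1,\Psi_2)$ has modulus $\le 2c\,L^{-(p-2)}$. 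Finally I would choose $L:=a^{-\epsilon/2}$; then $4L^2a=4a^{1-\epsilon}$, and since $\epsilon(p-2)/2=1-\epsilon/2$ one gets $L^{-(p-2)}=a^{1-\epsilon/2}\le a^{1-\epsilon}$ (using $a\le 1$), so that
$$
|\cov(\Psi_1,\Psi_2)|\le 4L^2a+4c\,L^{-(p-2)}\le(4+4c)\,a^{1-\epsilon}\le(4+5c)\,a^{1-\epsilon}.
$$

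The conceptual skeleton — bounded covariance inequality, then truncate, then optimize the cutoff — is routine; the genuinely delicate point is the bookkeeping of the tail estimates. One must choose the Hölder exponents so that only the \emph{first} power of $c$ appears (a plain Cauchy--Schwarz would instead produce $c^{1/2+1/p}$ and force a less convenient choice of $L$), and one must keep track of which error term carries the full $\Psi_i$ and which carries the truncated $\Psi_i^L$ so that the constants collapse correctly; as the computation above indicates, one in fact obtains the slightly sharper constant $4+4c$, which a fortiori gives the stated bound.
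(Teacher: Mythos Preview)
Your argument is correct and follows the same truncate--bound--optimize skeleton as the paper, with the identical choice $L=\alpha(\A_1,\A_2)^{-\epsilon/2}$. The technical implementation differs in three minor respects: (i) you truncate by \emph{clipping}, $\Psi_i^L=(-L)\vee(\Psi_i\wedge L)$, and split the covariance into three terms, whereas the paper truncates by the \emph{indicator} $\hat\Psi_i=\Psi_i\ind_{|\Psi_i|\le L}$ and obtains four terms (the extra ``double-tail'' term is handled by Cauchy--Schwarz and costs the additional $c$); (ii) for the bounded covariance bound $|\cov(X,Y)|\le 4\alpha\lVert X\rVert_\infty\lVert Y\rVert_\infty$ you use the layer-cake representation, while the paper uses the conditioning-and-sign trick to reduce to $\{-1,+1\}$-valued variables; (iii) for the tails you use the pointwise inequality $|\Psi|\ind_{|\Psi|>L}\le|\Psi|^{p-1}L^{-(p-2)}$ together with H\"older at exponents $(p,\tfrac{p}{p-1})$, whereas the paper uses direct Markov-type bounds $\E|\tilde\Psi_i|\le cL^{-2/\epsilon}$ and $\E\tilde\Psi_i^2\le cL^{1-2/\epsilon}$. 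Your route yields the slightly sharper constant $4+4c$; the paper's is arguably more self-contained in that the sign trick avoids the Fubini step. Either way, the proofs are interchangeable.
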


\begin{proof}

Let $L\ge 1$ be a positive number which we will fix later, and define the truncated random variables
$$
\hat{\Psi}_i = \Psi_i \ind_{|\Psi_i|\le L}
\,\,\text{and}\,\,
\tilde{\Psi}_i = \Psi_i \ind_{|\Psi_i|> L},
\,\,i=1,2.
$$
We can estimate
\begin{align*}
|\cov (\hat{\Psi}_1,\hat{\Psi}_2)| 
&= 
\left|
\E \left[
\hat{\Psi}_1
\left(
\E (\hat{\Psi}_2\mid \A_1)
-
\E (\hat{\Psi}_2)
\right)
\right]
\right|
\le 
L \E \left(\left|\E (\hat{\Psi}_2\mid \A_1)
-
\E (\hat{\Psi}_2)\right|\right) \\
&=
L \E \left[\zeta_1\left(\E (\hat{\Psi}_2\mid \A_1)
-
\E (\hat{\Psi}_2)\right)\right] = L\cov (\zeta_1, \hat{\Psi}_2),
\end{align*}
where $\zeta_1=\mathrm{sgn}\left(\E (\hat{\Psi}_2\mid \A_1)
-
\E (\hat{\Psi}_2)\right)$ is $\A_1$-measurable hence by interchanging the role of $\zeta_1$ and $\hat{\Psi}_2$, we can apply the same argument, and thus obtain $|\cov (\hat{\Psi}_1,\hat{\Psi}_2)| \le L^2 |\cov (\zeta_1,\zeta_2)|$. Note that $\zeta_1$ and $\zeta_2$ take values in $\{-1,+1\}$. So, let $A_1 = \{\zeta_1=+1\}$, $A_2 = \{\zeta_1=-1\}$, $B_1 = \{\zeta_2=+1\}$, and $B_2 = \{\zeta_2=-1\}$, where $A_i$-s are $\A_1$-measurable, and $B_i$-s are $\A_2$-measurable. We can write
\begin{align*}
\cov (\zeta_1,\zeta_2) &= 
\P (A_1\cap B_1) + \P (A_2\cap B_2)
- \P (A_1\cap B_2) - \P (A_2\cap B_1)-(\P(A_1)-\P(A_2))(\P(B_1)-\P(B_2)) \\
&\le \sum_{i,j=1}^2|\P (A_i\cap B_j)-\P(A_i)\P(B_j)|\le 4\alpha (\A_1,\A_2),
\end{align*}
and thus we arrive at $|\cov (\hat{\Psi}_1,\hat{\Psi}_2)|\le 4L^2 \alpha (\A_1,\A_2)$.
Trivially, $\E (|\tilde{\Psi}_i|)\le cL^{-2/\epsilon}$ and $\E (\tilde{\Psi}_i^2)\le c L^{1-2/\epsilon}$, $i=1,2$ hence for $i,j=1,2$, $|\cov (\hat{\Psi}_i,\tilde{\Psi}_j)|\le 2L\E(|\tilde{\Psi}_j|)\le 2cL^{1-2/\epsilon}$, and also by the Cauchy--Schwartz inequality, we get
\begin{align*}
|\cov (\Psi_1,\Psi_2)|
&\le
|\cov (\hat{\Psi}_1,\hat{\Psi}_2)|
+
|\cov (\hat{\Psi}_1,\tilde{\Psi}_2)|
+
|\cov (\tilde{\Psi}_1,\hat{\Psi}_2)|
+
|\cov (\tilde{\Psi}_1,\tilde{\Psi}_2)| 
\\
&\le
4L^2 \alpha (\A_1,\A_2) +4cL^{1-2/\epsilon}
+ \E (\tilde{\Psi}_1^2)^{1/2} \E (\tilde{\Psi}_2^2)^{1/2}
\le 4L^2 \alpha (\A_1,\A_2) + 5cL^{1-2/\epsilon}.
\end{align*}
At last, we set $L=\alpha (\A_1,\A_2)^{-\epsilon/2}$, and since $\alpha (\A_1,\A_2)\le 1$, we obtain
\begin{align*}
|\cov (\Psi_1,\Psi_2)| &\le 4 \alpha (\A_1,\A_2)^{1-\epsilon} + 5c\alpha (\A_1,\A_2)^{1-\epsilon/2}\le (4+5c)\alpha (\A_1,\A_2)^{1-\epsilon},
\end{align*}
which completes the proof.
	
\end{proof}

\begin{proof}[Proof of Lemma \ref{lem:autocov}]
	
i) Let $l\in\N$ be arbitrary. Then by the Cauchy--Schwartz inequality and Lemma \ref{lem:UI}, we can write
\begin{align*}
|\E (\phi (\th_k)\phi (\th_{k+l}))-\E (\phi (\th_k^\ast)\phi (\th_{k+l}^\ast))|
\le c_2 \left[\E^{1/2}((\phi (\th_k)-\phi (\th_k^\ast))^2) + 
\E^{1/2}((\phi (\th_{k+l})-\phi (\th_{k+l}^\ast))^2)\right].
\end{align*}
Let $k\ge N$, where $N$ is as in Lemma \ref{lem:coupling} and \ref{lem:annealed}. We can estimate further by	
\begin{align*}
\E^{1/2}((\phi (\th_k)-\phi (\th_k^\ast))^2) 
&= \E^{1/2}
\left[
\left(
\phi \left(Z_{0,k}^{\th_0,\mathbf{y}}\right)
-
\phi \left(Z_{0,k}^{\th_0^\ast,\mathbf{y}}\right)
\right)^2
\ind_{Z_{0,k}^{\th_0,\mathbf{y}}\ne Z_{0,k}^{\th_0^\ast,\mathbf{y}}}
\right]
\\
&\le (\E^{1/4}(\phi (\th_k)^4)+\E^{1/4}(\phi (\th_k^\ast)^4)) \P\left(Z_{0,k}^{\th_0,\mathbf{Y}}\ne Z_{0,k}^{\th_0^\ast,\mathbf{Y}}\right)^{1/4}
\\
&\le
2c_4 \left(V(\th_0)+\frac{3}{2}+\frac{C}{2(1-\ga)}\right)^{1/4} e^{-\frac{\ka}{4} k}
\end{align*}	
Along similar lines, one can show that the same upper bound works for $\E^{1/2}((\phi (\th_{k+l})-\phi (\th_{k+l}^\ast))^2)$ as well, and thus we get
$$
|\E (\phi (\th_k)\phi (\th_{k+l}))-\E (\phi (\th_k^\ast)\phi (\th_{k+l}^\ast))|
\le 
4 c_2 c_4 \left(V(\th_0)+\frac{3}{2}+\frac{C}{2(1-\ga)}\right)^{1/4} e^{-\frac{\ka}{4} k}.
$$
For $l=0$ and $\sqrt{\phi(\theta_{t})}$, we obtain $\E (\phi (\th_k))\to\E (\phi (\th_0^\ast))$, as $k\to\infty$ hence we can conclude that $\cov (\phi (\th_k),\phi (\th_{k+l}))\to \cov (\phi (\th_0^\ast),\phi (\th_l^\ast))$, as $k\to\infty$.

\bigskip
\noindent
ii) We estimate
\begin{equation}\label{eq:terms}
|\cov (\phi (\th_k),\phi (\th_{k-l}))| 
\le
\left|\cov \left(
\phi(Z_{k-\lfloor l/2\rfloor,k}^{\th_0,\mathbf{Y}}),
\phi(Z_{0,k-l}^{\th_0,\mathbf{Y}})\right)\right|+
\left|\cov \left(
\phi(Z_{0,k}^{\th_0,\mathbf{Y}})-\phi(Z_{k-\lfloor l/2\rfloor,k}^{\th_0,\mathbf{Y}}),
\phi(Z_{0,k-l}^{\th_0,\mathbf{Y}})\right)\right|.
\end{equation}

Regarding the first term on the right-hand side of \eqref{eq:terms},  $\Psi_1:=\phi(Z_{0,k-l}^{\th_0,\mathbf{Y}})$ is $\F_{-\infty, k-l-1}^Y\vee \F_{-\infty,k-l}^\eps$-measurable and $\Psi_2:=\phi(Z_{k-\lfloor l/2\rfloor,k}^{\th_0,\mathbf{Y}})$ is $\F_{k-\lfloor l/2\rfloor,\infty}^Y\vee \F_{k-\lfloor l/2\rfloor+1,\infty}^\eps$-measurable. Clearly, $\E (|\Psi_1|^{2/\epsilon+1})=\E (|\phi (\th_{k-l})|^{2/\epsilon+1})$, and by the stationarity of $((Y_k,\eps_{k+1}))_{k\in\Z}$, $\E (|\Psi_2|^{2/\epsilon+1}) = \E (|\phi (\th_{\lfloor l/2\rfloor})|^{2/\epsilon+1})$ hence by Lemma \ref{lem:UI}, we have
$$
\E (|\Psi_i|^{2/\epsilon+1})\le \tilde{c}_{2/\epsilon+1}:= c_{2/\epsilon+1}^{2/\epsilon+1},\,\,i=1,2.
$$	

So we can apply Lemma \ref{lem:ibragimov} for $\Psi_1$ and $\Psi_2$, and by Remark \ref{rem:mix}, we obtain
\begin{align}\label{eq:cest1}
\begin{split}
\left|\cov \left(
\phi(Z_{k-\lfloor l/2\rfloor,k}^{\th_0,\mathbf{Y}}),
\phi(Z_{0,k-l}^{\th_0,\mathbf{Y}})\right)\right|
&\le 
(4+5\tilde{c}_{2/\epsilon+1}) \alpha (\F_{-\infty, k-l-1}^Y\vee \F_{-\infty,k-l}^\eps,\F_{k-\lfloor l/2\rfloor,\infty}^Y\vee \F_{k-\lfloor l/2\rfloor+1,\infty}^\eps)^{1-\epsilon}
\\
&=
(4+5\tilde{c}_{2/\epsilon+1})
\alpha (\F_{-\infty, k-l-1}^Y,\F_{k-\lfloor l/2\rfloor,\infty}^Y)^{1-\epsilon}
\\
&= (4+5\tilde{c}_{2/\epsilon+1}) \alpha^Y (l+1-\lfloor l/2\rfloor)^{1-\epsilon}
\le (4+5\tilde{c}_{2/\epsilon+1}) \alpha^Y (\lfloor l/2\rfloor)^{1-\epsilon}.
\end{split}
\end{align}

In the second term on the right-hand side of \eqref{eq:terms}, by the Cauchy--Schwartz inequality and Lemma \ref{lem:UI}, we have
\begin{align*}
\left|\cov \left(
\phi(Z_{0,k}^{\th_0,\mathbf{Y}})-\phi(Z_{k-\lfloor l/2\rfloor,k}^{\th_0,\mathbf{Y}}),
\phi(Z_{0,k-l}^{\th_0,\mathbf{Y}})\right)\right|
&\le 
c_2 \E^{1/2}\left(\ind_{Z_{0,k}^{\th_0,\mathbf{Y}}\ne Z_{k-\lfloor l/2\rfloor,k}^{\th_0,\mathbf{Y}} }\left(\phi(Z_{0,k}^{\th_0,\mathbf{Y}})-\phi(Z_{k-\lfloor l/2\rfloor,k}^{\th_0,\mathbf{Y}})\right)^2\right)
\\
&\le
2 c_2 c_4 \P (Z_{0,k}^{\th_0,\mathbf{Y}}\ne Z_{k-\lfloor l/2\rfloor,k}^{\th_0,\mathbf{Y}})^{1/4}
\end{align*}
Note that $2N\le l$, and thus by Lemma \ref{lem:annealed} and Corollary \ref{cor:drift}, we can write
\begin{align*}
\P \left(Z_{0,k}^{\th_0,\mathbf{Y}}\ne Z_{k-\lfloor l/2\rfloor,k}^{\th_0,\mathbf{Y}}\right)
&=
\P \left(Z_{k-\lfloor l/2\rfloor,k}^{\th_{k-\lfloor l/2\rfloor},\mathbf{Y}}\ne Z_{k-\lfloor l/2\rfloor,k}^{\th_0,\mathbf{Y}}\right)
\le
\left(V(\th_0)+\frac{3}{2}+\frac{C}{2(1-\ga)}\right)
e^{-\ka \lfloor l/2\rfloor},
\end{align*}
and thus we arrive at
\begin{equation}\label{eq:cest2}
\left|\cov \left(
\phi(Z_{0,k}^{\th_0,\mathbf{Y}})-\phi(Z_{k-\lfloor l/2\rfloor,k}^{\th_0,\mathbf{Y}}),
\phi(Z_{0,k-l}^{\th_0,\mathbf{Y}})\right)\right|
\le 
2 c_2 c_4
\left(V(\th_0)+\frac{3}{2}+\frac{C}{2(1-\ga)}\right)^{1/4}
e^{-\frac{\ka}{4} \lfloor l/2\rfloor}.
\end{equation}
Substituting \eqref{eq:cest1} and \eqref{eq:cest2} into \eqref{eq:terms}, and replacing $k$ by $k+l$ yields 
\begin{equation*}
|\cov (\phi (\th_k),\phi (\th_{k+l}))|\le 
(4+5\tilde{c}_{2/\epsilon+1}) \alpha^Y (\lfloor l/2\rfloor)^{1-\epsilon}
+
2 c_2 c_4\left(V(\th_0)+\frac{3}{2}+\frac{C}{2(1-\ga)}\right)^{1/4}
e^{-\frac{\ka}{4} \lfloor l/2\rfloor}
\end{equation*}
which completes the proof.

\end{proof}

\bibliography{clt_langevin_miklos}
\bibliographystyle{plain}
	
\end{document}